\documentclass[11pt]{amsart}
\usepackage{amsmath,amssymb}
\usepackage{graphicx}
\usepackage{dsfont}
\usepackage{xcolor}
\usepackage{enumitem}
\usepackage{bm}
\usepackage[margin=1in]{geometry}
\usepackage[colorlinks=true,citecolor=magenta,linkcolor=violet]{hyperref}

\newcommand{\Sf}{\mathbb{S}^2}

\newcommand{\Ker}{\mathrm{Ker\hspace{0.5mm}}}

\newcommand{\Dom}{\mathrm{Dom}}
\newcommand{\norm}[1]{\left\|#1\right\|}
\newcommand{\scalprod}[1]{\left\langle#1\right\rangle}
\newcommand{\R}{\mathbb{R}}
\newcommand{\T}{\mathbb{T}}
\newcommand{\EL}{\mathsf{EL}}
\newcommand{\CH}{\mathsf{CH}}

\newcommand{\dd}{\mathrm{d}}
\newcommand{\mono}{\mathsf{mono}}
\newcommand{\cross}{\mathsf{cross}}
\newcommand{\spaceV}{L^2_v\left(\bm{\mu}^{-\frac{1}{2}}\right)}
\newcommand{\spaceVgamma}{L^2_v\left(\langle v \rangle^{\frac{\gamma}{2}} \bm{\mu}^{-\frac{1}{2}}\right)}

\newtheorem{theorem}{Theorem}
\newtheorem{lemma}{Lemma}
\newtheorem{remark}{Remark}

\makeatletter
\def\blfootnote{\xdef\@thefnmark{}\@footnotetext}
\makeatother

\author[A. Bondesan]{Andrea Bondesan}
\address{Andrea Bondesan \hfill\break
	Department of Mathematical, Physical and Computer Sciences \hfill\break 
    University of Parma \hfill\break
	Parco Area delle Scienze 53/A, 43124 Parma, Italy \vspace*{2mm}}
 
\email{andrea.bondesan@unipr.it, andrea.bondesan@gmail.com \vspace*{5mm}}

\author[B. Q. Tang]{Bao Quoc Tang}
\address{Bao Quoc Tang \hfill\break
	Department of Mathematics and Scientific Computing \hfill\break University of Graz \hfill\break
	Heinrichstrasse 36, 8010 Graz, Austria \vspace*{2mm}}
\email{quoc.tang@uni-graz.at, baotangquoc@gmail.com}

\title[Explicit spectral gap for reactive Boltzmann]{Explicit spectral gap estimates for the\\linearized Boltzmann operator modeling\\reactive gaseous mixtures}

\begin{document}

\maketitle

\vspace*{-0.8cm}
\begin{abstract}
We consider hard-potential cutoff multi-species Boltzmann operators modeling microscopic binary elastic collisions and bimolecular reversible chemical reactions inside a gaseous mixture. We prove that the spectral gap estimate derived for the linearized elastic collision operator can be exploited to deduce an explicit negative upper bound for the Dirichlet form of the linearized reactive Boltzmann operator. This estimate may be used to quantify explicitly the rate of convergence of close-to-equilibrium solutions to the reactive Boltzmann equation toward the global chemical equilibrium of the mixture.
\end{abstract}

\vspace*{0.5cm}
\noindent \textbf{Keywords:} Multi-species reactive Boltzmann equation; Linearized operator; Explicit spectral gap.

\vspace*{0.5cm}
\noindent \textbf{AMS Subject Classification:} 82B40; 76P05; 35Q20; 35P15.

\tableofcontents


\section{Introduction}


\noindent This paper is concerned with the derivation of an explicit negative upper bound for the Dirichlet form associated with the linearized Boltzmann operator modeling chemical reactions inside a gaseous mixture. This result links directly to the study of the relaxation to equilibrium of solutions to the Boltzmann equation \cite{CerIllPul,UkaYan,Vil}, whose initial investigations can be traced back to Boltzmann's work \cite{Bol} containing the first formulation of the $H$-theorem for a single-species gas.

\smallskip
To provide a brief context here, from a mesoscopic point of view we may describe the evolution of a dilute gas, composed of a large number of identical monatomic particles undergoing binary collisions, with the use of a distribution function $F=F(t,x,v)$ that satisfies, on $\R_+\times\T^3\times\R^3$, the monospecies Boltzmann equation
\begin{equation}\label{eq:mono BE}
\partial_t F +v\cdot\nabla_x F = Q(F,F),
\end{equation}
where the Boltzmann integral operator $Q$, acting only on the velocity variable, models the microscopic collision processes. Now, from the entropy dissipation $\int_{\R^3} Q(F,F) \log F\dd v \leq 0$ associated with $Q$ follows the decrease over time of the entropy functional $H(F) = \int_{\R^3} F \log F \dd v$, until the distribution $F$ relaxes to a local (in time and space) Maxwellian equilibrium state, solution of $Q(F,F) = 0$. When the influence of the transport operator is considered, the $H$-theorem then implies for large time asymptotics the convergence of solutions to \eqref{eq:mono BE} toward a global equilibrium state having the form of a uniform (in time and space) Maxwellian distribution $M_{\infty}(v)=c_{\infty}(2\pi K_B T_{\infty})^{-3/2}\exp\left( -\frac{|v-u_{\infty}|^2}{2 K_B T_{\infty}} \right)$, with $c_{\infty}, T_{\infty}>0$ and $u_{\infty}\in\R^3$. Here, $K_B$ is the Boltzmann gas constant and the quantities $c_{\infty}$, $u_{\infty}$ and $T_{\infty}$ denote the global concentration, velocity and temperature of the gas, uniquely determined by the initial data.

\smallskip
Understanding whether the equilibrium is reached reasonably fast is a central question in kinetic theory of gases, as one expects that the approximation provided by the chaos molecular assumption is legitimate only for a time of order at most $\mathcal{O}(N_A)$, where $N_A=10^{23}$ is the Avogadro's number. Indeed, by this time each particle will have collided with a nonnegligible fraction of the other atoms and thus the validity of the Boltzmann equation should break down (the reader may refer to the discussion presented in \cite[Chapter 1, Section 2.5]{Vil}). It is therefore crucial to obtain explicit quantitative estimates on the time scale of the convergence, in order to show that it is much smaller than the time scale of validity of the model. 

\smallskip
From the point of view of the linearized theory, the problem reduces to investigating the behavior of a small perturbation $f$ around the global Maxwellian distribution $M_\infty$, and the solution to \eqref{eq:mono BE} is recovered as $F = M_\infty + f$. In fact, by injecting this expansion into the collision term $Q$, it is possible to prove an equivalent version of the $H$-theorem for the linearized Boltzmann operator $L_{M_\infty}(f) = Q(M_\infty, f) + Q(f, M_\infty)$ stating that the associated entropy dissipation, or Dirichlet form, $D(f) = \int_{\R^3} L_{M_\infty}(f) f M_\infty^{-1}\dd v$ is nonpositive and satisfies the upper bound
\begin{equation} \label{eq:mono SG}
D(f) \leq - \lambda \norm{f - \pi(f)}_{L_v^2(M_\infty^{-1/2})}^2,
\end{equation}
for any $f\in L^2(\R^3, M_\infty^{-1/2}) = \Big\{f : \R^3\to\R\ \textrm{measurable s.t. } \norm{ f }_{L_v^2(M_\infty^{-1/2})}^2 = \int_{\R^3} f^2 M_\infty^{-1}\dd v < +\infty\Big\}$. Here, $\pi$ is the orthogonal projection onto the space $\Ker (L_{M_\infty})$ of the equilibrium states of the linearized operator, while $\lambda > 0$ denotes the spectral gap of $- L_{M_\infty}$. In a spatially homogeneous setting, this approach was initiated by Hilbert \cite{Hil}, who studied the properties of $L_{M_\infty}$ in the hard-potential case. Existence of $\lambda$ was then proved by Carleman \cite{Car} and Grad \cite{Gra1,Gra2} for Maxwellian, hard-potential and hard-sphere collision kernels with cutoff, while the whole non-homogeneous case was successfully tackled by Ukai \cite{Uka}, who derived the spectral gap estimate \eqref{eq:mono SG} for the linear operator $v \cdot \nabla_x - L_{M_\infty}$. However, none of these results provides any information on the magnitude of $\lambda$ (nor on its dependence on the initial datum and on the physical quantities appearing in the problem) since they rely on the non-constructive argument that the essential spectrum of the coercive part of $L_{M_\infty}$ is preserved under the action of the remaining compact component, thanks to Weyl's theorem \cite{Kat}. 

\smallskip
The construction of a quantitative theory of convergence to equilibrium for solutions to the Boltzmann equation \eqref{eq:mono BE} that are already close to a steady state, in this linearized setting, translates into the derivation of explicit estimates for the spectral gap $\lambda$ and is known to constitute a first fundamental step for determining the effective relaxation rates in the full nonlinear case \cite{Mou2}, when combined with far-from-equilibrium quantitative entropy dissipation estimates \cite{DesVil}. The first outcomes in this sense were established in the case of Maxwell molecules via an explicit diagonalization of the linearized Boltzmann operator \cite{Bob,WanUhldeB}. In more recent years, systematic derivations of explicit hypocoercivity estimates for the linearized collision operator have been obtained for general cutoff and non-cutoff collision kernels \cite{BarMou,Mou1,MouStr}, finally leading to the first quantitative results on the relaxation to equilibrium in the full nonhomogeneous setting \cite{Mou2,MouNeu}. Notice that these derivations have also opened the way to the recovery of explicit convergence rates toward the hydrodynamic regimes of the Boltzmann equation, applied in combination with the energy method \cite{GuoINS} or the hypocoercive approach \cite{Bri}.

\smallskip
We are interested here in examining this question in the framework of the kinetic theory of multicomponent reacting gases \cite{AnwBisSalSoa,BruSch,Gio1,Gio2,GioGra,GroRosSpi,RosMaz,RosSpi}. Without going into too much details, we consider a mixture of 4 different species $\mathcal{S}_i$, $1 \leq i \leq 4$, whose distribution function $\mathbf{F} = (F_1, \ldots, F_4)$ solves, on $\R_+\times\T^3\times\R^3$, the Boltzmann-like equation
\begin{equation}\label{eq:reactive BE 1}
\partial_t \mathbf{F} + v \cdot \nabla_x \mathbf{F} = \mathbf{Q}(\mathbf{F},\mathbf{F}),
\end{equation}
where the Boltzmann operator $\mathbf{Q} = \mathbf{Q}^\EL + \mathbf{Q}^\CH$ splits into an inert part $\mathbf{Q}^\EL$ modeling microscopic elastic binary collisions between the particles and a reactive term $\mathbf{Q}^\CH$ describing reversible bimolecular chemical reactions of the form $\mathcal{S}_1 + \mathcal{S}_2 \leftrightharpoons \mathcal{S}_3 + \mathcal{S}_4$, with chemical binding energy $E_{12}^{34}$. Similarly to single-species gases, a multicomponent version of the $H$-theorem holds for the reactive operator $\mathbf{Q}$, see e.g. \cite{DesMonSal}, but in this case the corresponding $H$-function splits into two components associated respectively with the operators $\mathbf{Q}^\EL$ and $\mathbf{Q}^\CH$. These separately describe the entropy dissipation coming from both elastic collisions and chemical reactions. As a result, in the large time asymptotics the distribution function $\mathbf{F}$ should converge toward a global equilibrium $\mathbf{M}_\infty = (M_{1,\infty}, \ldots, M_{4,\infty})$ whose Maxwellian shape
\begin{equation} \label{eq:Maxwellian}
M_{i, \infty}(v) = c_{i,\infty} \left( \frac{m_i}{2 \pi K_B T_\infty} \right)^{3/2} \exp \left( -m_i \frac{|v - u_{\infty}|^2}{2 K_B T_\infty} \right), \quad v \in \R^3, \ 1 \leq i \leq 4,
\end{equation}
is prescribed by the elastic part $\mathbf{Q}^\EL$, while the chemical operator $\mathbf{Q}^\CH$ imposes the additional mass action law constraint
\begin{equation} \label{eq:MAL 1}
c_{3,\infty} c_{4,\infty} = c_{1,\infty} c_{2,\infty} \left( \frac{m_1 m_2}{m_3 m_4} \right)^{-3/2} \exp \left( -\frac{E_{12}^{34}}{K_B T_{\infty}} \right).
\end{equation}
Here, the vectors $(m_i)_{1\leq i\leq 4}$ and $(c_{i,\infty})_{1\leq i\leq 4} \in (\R_+^*)^4$ stand for the masses and the global concentrations of the different species, while $u_\infty \in \R^3$ and $T_\infty > 0$ indicate the bulk velocity and temperature of the whole mixture.

\smallskip
Our aim is to provide quantitative information on the spectral gap of the multi-species operator linearized around global Maxwellians like \eqref{eq:Maxwellian}. We will do so by considering a regime of small fluctuations $\mathbf{f} = (f_1, \ldots, f_4)$ of the equilibrium $\mathbf{M}_\infty$ and by deriving an explicit upper bound similar to \eqref{eq:mono SG}, for the Dirichlet form associated with the operator $\mathbf{L}_{\mathbf{M}_\infty}(\mathbf{f}) = \mathbf{Q}(\mathbf{M}_\infty,\mathbf{f}) + \mathbf{Q}(\mathbf{f}, \mathbf{M}_\infty)$. This contribution is still missing, despite being at the core of follow-up investigations on the speed of relaxation toward the equilibrium of reactive mixtures \cite{DesMonSal}, as well as on the rate of convergence to hydrodynamic models obtained from properly rescaled versions of \eqref{eq:reactive BE 1}, including reaction--diffusion systems \cite{BisDes} and reactive Maxwell--Stefan equations \cite{AnwBisSalSoa,AnwGonSoa}. In the multi-species kinetic literature there exist in fact several recent papers revolving around this topic, but they are mostly exploiting a non-constructive approach based on Weyl's perturbation theorem. We mention for example the first compactness result for the linearized collision operator modeling inert monatomic mixtures \cite{BouGrePavSal} and similar strategies developed to analyse more elaborate linearized operators for collisions inside of polyatomic gases \cite{Ber1,Ber2,Ber3, Ber4,BorBouSal,BruShaThi}, also involving chemical reactions \cite{Ber5,CarPolSoa}. The interested reader can consult the reviews \cite{BerBouColGre,BouSal} detailing the literature and techniques revolving around these recent outcomes. Results on quantitative coercivity estimates are instead less prevalent, pertain solely to the non-reactive setting (and to the case of Maxwellian, hard-potential and hard-sphere collision kernels with cutoff) and can be narrowed down to the works \cite{BriDau,DauJunMouZam}, which first obtained explicit bounds on the spectral gap of the linearization of $\mathbf{Q}^\EL$, and \cite{BonBouBriGre} where it was proved that this spectral gap is stable under small local non-equilibrium perturbations of the global Maxwellian $\mathbf{M}_\infty$, in connection with the rigorous derivation of the inert Maxwell--Stefan model \cite{BonBri1,BonBri2}.

\smallskip
Following the ideas developed in \cite{BriDau,DauJunMouZam}, we approach the problem by tackling separately the linearized counterparts of $\mathbf{Q}^\EL$ and $\mathbf{Q}^\CH$. Starting from the explicit coercivity estimate that is known to hold for the linearized multi-species elastic operator, we show that its spectral gap is not perturbed too much by the presence of the reactive component $\mathbf{Q}^\CH$. A careful treatment of the intricate cross-effects introduced by the chemical reactions is then needed in order to understand how they modify the distribution functions belonging to the set of equilibria of the inert part. Specifically, we construct a nonpositive functional over this nullspace, which cancels out at the local equilibria prescribed by (a linearized version of) the constraint \eqref{eq:MAL 1} and provides a negative upper bound for the Dirichlet form associated with the linearized chemical operator, allowing to control the cross-effects.

\medskip
The paper is organized as follows. In Section 2 we begin by introducing the kinetic model and the assumptions that are made on the different collision kernels. In particular, we provide an explicit characterization of a large class of cutoff hard-potential chemical cross-sections satisfying the micro-reversibility property \cite{GroPol}. We proceed by presenting the linearization of the collision operators and continue, in Section 3, recalling known properties of the inert and reactive parts. Then, we will state our main theorem and provide a brief description of the strategy used to obtain it. At last, the final section is devoted to the proof of this result.

	
\section{The reactive kinetic model}

\noindent We consider a dilute $4$-species gaseous mixture of particles interacting at the microscopic level via elastic binary collisions and reversible chemical reactions. The evolution of the different species, characterized by their respective molecular masses $(m_i)_{1 \leq i \leq 4}$, is described by a vector distribution function $\mathbf{F} = (F_1, \ldots, F_4)$ whose components solve, over time $t > 0$, space $x \in \T^3$ and velocity $v \in \R^3$, the system of reactive Boltzmann equations
\begin{equation}\label{eq:reactive BE 2}
\partial_t F_i + v \cdot \nabla_x F_i = Q_i(\mathbf{F},\mathbf{F}), \quad 1 \leq i \leq 4.
\end{equation}
Notice that vectors and vector-valued functions in the species will always be denoted by bold letters, while the corresponding indexed letters will indicate their components. For example, $\mathbf{W}$ stands for the vector or vector-valued function $(W_1, \ldots, W_4)$.

\smallskip
The Boltzmann multi-species operator $\mathbf{Q}(\mathbf{F}, \mathbf{F}) = \big( Q_1(\mathbf{F}, \mathbf{F}), \ldots, Q_4(\mathbf{F}, \mathbf{F}) \big)$ has a quadratic integral form and models the elastic and reactive interactions between the species. Therefore, it can be split into two separate parts as $\mathbf{Q}(\mathbf{F}, \mathbf{F}) = \mathbf{Q}^\EL(\mathbf{F}, \mathbf{F}) + \mathbf{Q}^\CH(\mathbf{F}, \mathbf{F})$, which only act on the velocity variable $v$ and are thus local in $(t, x)$.

\medskip
\noindent \textbf{The elastic component.} The operator $\mathbf{Q}^\EL$ gives a balance of the binary elastic collisions between particles of the same or of different species and is defined component-wise, for any $1 \leq i \leq 4$, by
\begin{equation*}
Q_i^\EL(\mathbf{F},\mathbf{F})(v) = \sum_{j=1}^4 Q_{ij}^\EL(F_i, F_j)(v) = \sum_{j=1}^4 \int_{\R^3\times\Sf} B_{ij}(|v - v_*|, \cos \theta)( F_i^{\prime} F_j^{\prime *} - F_i F_j^* )\dd v_* \dd \sigma, \quad v \in \R^3,
\end{equation*}
where we utilize the standard shorthand notations $F_i^{\prime} = F_i(v^{\prime})$, $F_i = F_i(v)$, $F_j^{\prime *} = F_j(v_*^{\prime})$ and $F_j^* = F_j(v_*)$. The post-collisional velocities $v^{\prime}$ and $v_*^{\prime}$ are given in terms of the pre-collisional velocities $v$ and $v_*$ by the elastic collision rules
\begin{equation*}
v^{\prime} = \frac{m_i v + m_j v_*}{m_i + m_j} + \frac{m_j}{m_i + m_j} |v - v_*| \sigma, \qquad
v_*^{\prime} = \frac{m_i v + m_j v_*}{m_i + m_j} - \frac{m_i}{m_i + m_j} |v - v_*| \sigma,
\end{equation*}
where $\sigma \in \Sf$ is a parameter whose existence is ensured by the conservation of microscopic momentum and kinetic energy
\begin{equation} \label{eq:conservation EL}
m_i v + m_j v_* = m_i v^{\prime} + m_j v_*^{\prime}, \qquad
\frac{1}{2} m_i |v|^2 + \frac{1}{2} m_j |v_*|^2 = \frac{1}{2} m_i |v^{\prime}|^2 + \frac{1}{2} m_j |v_*^{\prime}|^2.
\end{equation}
The elastic collisional cross-sections $B_{ij}$ are nonnegative functions of the modulus of the incoming relative velocity of the colliding particles $|v - v_*|$ and of the cosine of the deviation angle $\theta \in [0,\pi]$ between $v - v_*$ and $\sigma \in \Sf$. They encode the information on how the mixture molecules interact microscopically and their choice is essential for studying the properties of the Boltzmann operator. We focus our attention here on cutoff Maxwellian, hard-potential and hard-sphere collision kernels.

\medskip
\noindent \textbf{Assumptions on the elastic cross-sections.}  The following assumptions on the elastic collision kernels $(B_{ij})_{1\leq i,j \leq 4}$ are the same considered in \cite{BriDau}.

\smallskip
\begin{enumerate}[leftmargin=1.5cm]
\item[(EL1)] They satisfy a symmetry property when interchanging the species indices $i$ and $j$
\begin{equation*}
B_{ij}(|v - v_*|, \cos \theta) = B_{ji}(|v - v_*|, \cos \theta), \quad \forall v, v_* \in \R^3, \ \forall \theta \in [0,\pi].
\end{equation*}
\item[(EL2)] They write as the product of a kinetic part $\Phi_{ij} \geq 0$ and an angular part $b_{ij} > 0$, namely
\begin{equation*}
B_{ij}(|v - v_*|, \cos \theta) = \Phi_{ij}(|v - v_*|) b_{ij}(\cos \theta), \quad \forall v, v_* \in \R^3, \ \forall \theta \in [0,\pi].
\end{equation*}
\item[(EL3)] For the kinetic part, we assume that there exists a constant $\gamma \in [0,1]$ such that
\begin{equation*}
\Phi_{ij}(|v - v_*|) = C_{ij} |v - v_*|^{\gamma}, \quad C_{ij} > 0, \quad \forall v, v_* \in \R^3.
\end{equation*}
\item[(EL4)] We suppose that the angular part is positive for a.e. $\theta \in [0,\pi]$ and that there exists a constant $C >0$ such that, for any $1 \leq i,j \leq 4$,
\begin{equation*}
b_{ij}(\cos \theta) \leq C |\cos \theta| |\sin \theta|, \quad \forall \theta \in [0,\pi].
\end{equation*}
Furthermore, we assume that for any $1\leq i\leq 4$
\begin{equation*}
\inf_{\sigma_1, \sigma_2 \in \Sf} \int_{\Sf} \min\{ b_{ii}(\sigma_1 \cdot \sigma_3), b_{ii}(\sigma_2 \cdot \sigma_3) \} \dd \sigma_3 > 0.
\end{equation*}
\end{enumerate}
Assumption (EL1) translates a micro-reversibility property for collisions. Assumption (EL2) on the tensorised form of $B_{ij}$ is classical in both monospecies and multi-species settings (see \cite{BarMou,BriDau,DauJunMouZam,Mou1}) and holds for all collision kernels describing interaction potentials that behave like inverse-power laws. This hypothesis is used for a sake of simplicity and it could be weakened to consider upper and lower bounds on $B_{ij}$ by tensorial products of this kind. Dismissing this assumption completely would need to be compensated by the introduction of technical conditions on $B_{ij}$, to ensure the validity of the quantitative estimates on the spectral gap of the monospecies linearized Boltzmann operator derived in \cite{BarMou,Mou1}. Assumption (EL3) on the kinetic component is satisfied by Maxwell molecules ($\gamma = 0$), hard-potential ($0 < \gamma < 1$) and hard-sphere ($\gamma = 1$) collision kernels. The two conditions on $b_{ij}$ allow to ensure its integrability on the sphere and the positivity of such integral. In particular, the upper bound on $b_{ij}$ is classical \cite{BouGrePavSal,BriDau,DauJunMouZam,Gra2,Mou1} and implies Grad's cutoff assumption. Finally, the last assumption on $b_{ii}$ in (EL4) is satisfied by all relevant physical models and is needed to recover a spectral gap for the linearized operator associated with $\mathbf{Q}^\EL$ \cite{BriDau,Mou1}.

\begin{remark} \label{remark1}
We point out that one could replace the rather restrictive assumption (EL3) on the kinetic part $\Phi_{ij}(|v-v_*|)$ with the following more general condition:
\begin{enumerate}[leftmargin=1.5cm]
\item[(EL3')] There exist three constants $\underline{C}_{ij}, \overline{C}_{ij} > 0$ and $\gamma_{ij}=\gamma_{ji} \in [0,1]$ such that
\begin{equation*}
\underline{C}_{ij} |v-v_*|^{\gamma_{ij}} \leq \Phi_{ij}(|v - v_*|) \leq \overline{C}_{ij} \big(1 + |v - v_*|\big), \quad \forall v, v_* \in \R^3.
\end{equation*}
\end{enumerate}
However, in this case one must also enforce (see \cite[Section 1.3, condition (A6)]{DauJunMouZam}) that for any $1 \leq i \leq 4$ the elastic multi-species collision frequency
\begin{equation} \label{eq:nu EL}
    \nu_i^\EL(v) = \sum_{j=1}^4 \int_{\R^3 \times \Sf} B_{ij}(|v-v_*|,\cos\theta) \mu_j(v_*) \dd v_* \dd \sigma, \quad v \in \R^3.
\end{equation}
is controlled by the corresponding monospecies component
\begin{equation*}
    \nu_{ii}^\EL(v) = \int_{\R^3 \times \Sf} B_{ii}(|v-v_*|,\cos\theta) \mu_i(v_*) \dd v_* \dd \sigma, \quad v \in \R^3,
\end{equation*}
This amounts to impose the additional condition 
\begin{enumerate}[leftmargin=1.5cm]
\item[(EL5)] There exists a constant $\beta^\EL > 0$ such that, for any $1\leq i \leq 4$, it holds
\begin{equation*}
    \nu_i^\EL(v) \leq \beta^\EL \nu_{ii}^\EL(v), \quad \forall v \in \R^3.
\end{equation*}
\end{enumerate}
Hypothesis (EL3') would allow to consider more general collision kernels of power law-type that possess different species-dependent exponents \cite{AloColGam,BouGrePavSal} of the form $\Phi_{ij}(|v-v_*|) = C_{ij} |v-v_*|^{\gamma_{ij}}$, with $\gamma_{ij} \in [0,1]$. We emphasize that the lower bound in (EL3') is a minimal requirement needed to recover quantitative estimates on the spectral gap of the inert Boltzmann operator \cite{DauJunMouZam}. The same consideration holds for the upper bound, which imposes that the kinetic part is of restricted growth for both small and large values of $|v-v_*|$. We stress however that (EL3') alone is not enough to obtain explicit controls on the spectral gap of the multi-species Boltzmann operator \cite{DauJunMouZam} and must be combined with assumption (EL5) to guarantee the validity of Theorem \ref{theorem:elastic SG}. This hypothesis ensures that the ratio between off-diagonal and diagonal collision frequencies can be bounded uniformly from above, and that one can use the spectral gap of the (inert) monospecies operator to control that of the multi-species operator. In \cite{DauJunMouZam} this condition is actually imposed by requiring the existence of a constant $\beta^\EL > 0$ such that $B_{ij}(|v-v_*|,\cos\theta) \leq \beta^\EL B_{ii}(|v-v_*|,\cos\theta)$ for any $v,v_* \in \R^3$ and $\theta \in [0,\pi]$, but the authors there deal with a particular multi-species setting where all molecular masses $m_i = m$, $1 \leq i \leq 4$, are equal. Here, we chose to consider (EL3), instead of (EL3')--(EL5), for a sake of clarity in the derivation of the control estimates on the collision frequencies of the Boltzmann reactive operator, but our results could be adapted from \cite{DauJunMouZam} to deal with the more general setting provided by (EL3')--(EL5). Notice in particular that (EL3) implies not only (EL3') but also (EL5), since in this case it would simply hold $\nu_i^\EL(v) \sim \nu_{ii}^\EL(v) \sim 1+|v|^\gamma$ for any $1 \leq i \leq 4$.
\end{remark}

\medskip
\noindent \textbf{The reactive component.} To model the reactive part, we follow the framework introduced by Rossani and Spiga in \cite{RosSpi}. The operator $\mathbf{Q}^\CH = (Q_1^\CH, \ldots, Q_4^\CH)$ describes the way particles of different species interact through the reversible bimolecular chemical reaction
\begin{equation} \label{eq:reaction}
\mathcal{S}_1 + \mathcal{S}_2 \leftrightharpoons \mathcal{S}_3 + \mathcal{S}_4,
\end{equation}
where the total mass of the components involved, $m_1 + m_2 = m_3 + m_4$, is conserved in the process. 
We assume here that the forward reaction is endothermic with impinging energy $E_{12}^{34} = E_4 + E_3 - E_2 - E_1 \geq 0$, where $E_i$ denotes the energy of the chemical link of the species $\mathcal{S}_i$, $1 \leq i \leq 4$. It will be useful to also denote with $E_{34}^{12} = E_2 + E_1 - E_4 - E_3 \leq 0$ the energy dissipated in the backward exothermic process. In general, we shall always use the notation $W_{ij}^{hk}$, where the quadruples $(i,j,h,k)$ cover all the possible combinations of indices for the reaction \eqref{eq:reaction}, to indicate that the variable $W$ is related to the specific interaction $\mathcal{S}_i + \mathcal{S}_j \to \mathcal{S}_h + \mathcal{S}_k$.

\smallskip
We define each $Q_i^\CH$, $1 \leq i \leq 4$, separately. For the forward reaction $\mathcal{S}_1 + \mathcal{S}_2 \to \mathcal{S}_3 + \mathcal{S}_4$, the post-collisional velocities $v^\prime = v_{12}^{34}$ and $v^\prime_* = v_{*12}^{\ 34}$ depend on the velocities $v$ and $v_*$ of the incoming particles (belonging to the species $\mathcal{S}_1$ and $\mathcal{S}_2$) through the relations
\begin{equation} \label{eq:post-collisional CH}
v^\prime = \frac{m_1 v + m_2 v_*}{m_1 + m_2} + \frac{m_4}{m_3 + m_4} g_{12}^{34} \sigma, \qquad
v^\prime_* = \frac{m_1 v + m_2 v_*}{m_1 + m_2} - \frac{m_3}{m_3 + m_4} g_{12}^{34} \sigma,
\end{equation}
where we have denoted with $g_{12}^{34} = |v^\prime - v^\prime_*|$ the modulus of the outgoing relative velocity
\begin{equation} \label{eq:energy}
g_{12}^{34} = \left[ \frac{m_{12}}{m_{34}} \left( |v - v_*|^2 - \frac{2 E_{12}^{34}}{m_{12}} \right) \right]^{1/2},
\end{equation}
and with $m_{ij} = \frac{m_i m_j}{m_i + m_j}$ the various reduced masses. In particular, the relation \eqref{eq:energy} can be deduced by the conservation of momentum and total (kinetic and chemical) energy
\begin{equation} \label{eq:conservation CH}
\begin{aligned}
m_1 v + m_2 v_* & = m_3 v^\prime + m_4 v^\prime_*, \\[2mm]
\frac{1}{2} m_1 |v|^2 + E_1 + \frac{1}{2} m_2 |v_*|^2 + E_2 & = \frac{1}{2} m_3 |v^\prime|^2 + E_3 + \frac{1}{2} m_4 |v^\prime_*|^2 + E_4,
\end{aligned}
\end{equation}
which also imply the existence of the parameter $\sigma \in \Sf$. It is then clear that for the endothermic reaction to happen, there must be enough impinging energy ensuring that $|v - v_*|^2 \geq 2 E_{12}^{34} /m_{12}$. Bearing this last observation in mind, the net production of molecules of species $\mathcal{S}_1$ due to the chemical reaction \eqref{eq:reaction} is prescribed by the operator $Q_1^\CH$ which writes
\begin{equation*} 
\begin{aligned}
Q_1^\CH(\mathbf{F}, \mathbf{F})(v) & = \int_{\R^3 \times \Sf} H \left( |v - v_*|^2 - \frac{2 E_{12}^{34}}{m_{12}} \right) B_{12}^{34}(|v - v_*|, \cos \theta)
\\[2mm]   & \hspace{2.5cm} \times \left[ \left( \frac{m_{12}}{m_{34}} \right)^3 F_3(v^\prime) F_4 (v^\prime_*) - F_1(v) F_2(v_*) \right] \dd v_* \dd \sigma,
\end{aligned}
\end{equation*}
with the threshold on the impinging energy encoded by the Heaviside function $H(x) = \mathds{1}_{x \geq 0}$. Inside the integral appears the cross-section $B_{12}^{34}$ relative to the chemical process $\mathcal{S}_1 + \mathcal{S}_2 \to \mathcal{S}_3 + \mathcal{S}_4$. Like the elastic cross-sections, all the reactive kernels $B_{ij}^{hk}$ are nonnegative functions that only depend on the quantities $|v - v_*|$ and $\cos \theta = \frac{v - v_*}{|v - v_*|} \cdot \sigma$. By assuming the natural indistinguishability condition for the $B_{ij}^{hk}$, namely
\begin{equation} \label{eq:indistinguishability}
B_{ij}^{hk}(|v - v_*|, \cos \theta) = B_{ji}^{kh}(|v - v_*|, \cos \theta),
\end{equation}
the net production of molecules of species $\mathcal{S}_2$ is given by the operator $Q_2^\CH$, obtained from $Q_1^\CH$ through a permutation of indices as
\begin{equation*} 
\begin{aligned}
Q_2^\CH(\mathbf{F}, \mathbf{F})(v) & = \int_{\R^3 \times \Sf} H \left( |v - v_*|^2 - \frac{2 E_{12}^{34}}{m_{12}} \right) B_{12}^{34}(|v - v_*|, \cos \theta)
\\[2mm]   & \hspace{2.5cm} \times \left[ \left( \frac{m_{12}}{m_{34}} \right)^3 F_4(v^\prime) F_3 (v^\prime_*) - F_2(v) F_1(v_*) \right] \dd v_* \dd \sigma,
\end{aligned}
\end{equation*}
where now the velocities $v^\prime = v_{21}^{43}$ and $v^\prime_* = v_{* 21}^{\ 43}$ are obtained by simply exchanging the indices $1 \leftrightarrow 2$ and $3\leftrightarrow 4$ in the expressions \eqref{eq:post-collisional CH}.

\smallskip
Since the process $\mathcal{S}_3 + \mathcal{S}_4 \to \mathcal{S}_1 + \mathcal{S}_2$ is exothermic, there is no appearance of the previous energy threshold \eqref{eq:energy} and thus the backward reaction can occur for any relative speed of the incoming velocities. If we now assume that the forward and backward reactive cross-sections are related by the so-called micro-reversibility condition
\begin{equation} \label{eq:micro-reversibility}
|v - v_*| B_{ij}^{hk}(|v - v_*|, \cos \theta) = \left( \frac{m_{hk}}{m_{ij}} \right)^2 |v_{ij}^{hk} - v_{* ij}^{\ hk}| B_{hk}^{ij}(|v_{ij}^{hk} - v_{* ij}^{\ hk}|, \cos \theta),
\end{equation}
then, the operator $Q_3^\CH$ modeling the net production of molecules of species $\mathcal{S}_3$ has the form
\begin{equation*}
Q_3^\CH(\mathbf{F}, \mathbf{F})(v) = \int_{\R^3 \times \Sf} B_{34}^{12}(|v - v_*|, \cos \theta) \left[ \left( \frac{m_{34}}{m_{12}} \right)^3 F_1(v^\prime) F_2(v^\prime_*) - F_3(v) F_4(v_*) \right] \dd v_* \dd \sigma,
\end{equation*}
with $v^\prime = v_{34}^{12}$ and $v^\prime_* = v_{* 34}^{\ 12}$. At last, the balance of molecules of species $\mathcal{S}_4$ produced in the backward reaction is given by the operator $Q_4^\CH$, which is obtained from $Q_3^\CH$ by permuting the indices and using condition \eqref{eq:indistinguishability}, and reads
\begin{equation*}
Q_4^\CH(\mathbf{F}, \mathbf{F})(v) = \int_{\R^3 \times \Sf} B_{34}^{12}(|v - v_*|, \cos \theta) \left[ \left( \frac{m_{34}}{m_{12}} \right)^3 F_2(v^\prime) F_1(v^\prime_*) - F_4(v) F_3(v_*) \right] \dd v_* \dd \sigma,
\end{equation*}
by an obvious permutation of the indices in the corresponding post-collisional velocities to determine the expressions of $v^\prime = v_{43}^{21}$ and $v^\prime_* = v_{* 43}^{\ 21}$.

\smallskip
At this point we need to state the precise assumptions on the reactive kernels $B_{ij}^{hk}$, that are required for the following analysis. Motivated by the result obtained in \cite{GroPol} for hard-sphere interactions, we provide a first (up to our knowledge) explicit characterization of a wide class of cutoff hard-potential reactive kernels compatible with the micro-reversibility condition \eqref{eq:micro-reversibility}. Similar considerations appear in fact in many papers dealing with the study of polyatomic gases \cite{Ber1, Ber2,Ber3,Ber4,Ber5,BorBouSal,BruShaThi,GamPav}, but it seems that a general framework able to include chemical reactions is still missing and we believe that this contribution, although very small, could be of independent interest for the researchers in the field.

\medskip
\noindent \textbf{Assumptions on the reactive cross-sections.} For a sake of clarity in the presentation, let us introduce the following unified notation for the incoming and the outgoing relative velocities
\begin{equation*}
g = |v - v_*|, \qquad g_{ij}^{hk} = \left[ \frac{m_{ij}}{m_{hk}} \left( g^2 - \frac{2 E_{ij}^{hk}}{m_{ij}} \right) \right]^{\frac{1}{2}},
\end{equation*}
corresponding to any of the interactions in \eqref{eq:reaction}. Notice in particular that for the reverse reactions no threshold energy is required since $- E_{34}^{12} = E_{12}^{34} \geq 0$. When choosing the cross-sections in the chemical framework (see \cite{GroPol} for an exhaustive discussion on this matter), we need to ensure that they satisfy condition \eqref{eq:micro-reversibility} which rewrites, using this notation, as
\begin{equation*}
g B_{ij}^{hk}(g, \cos \theta) = \left( \frac{m_{hk}}{m_{ij}} \right)^2 g_{ij}^{hk} B_{hk}^{ij}(g_{ij}^{hk}, \cos \theta).
\end{equation*}
In order to identify a reasonable structure for the $B_{ij}^{hk}$ to solve these relations, the key observation is that the reversibility of the reaction \eqref{eq:reaction} translates into a symmetry property on the outgoing relative velocities
\begin{equation*}
g_{hk}^{ij} \big |_{g_{ij}^{hk}} = \left[ \frac{m_{hk}}{m_{ij}} \left( \big( g_{ij}^{hk} \big)^2 + \frac{2 E_{ij}^{hk}}{m_{hk}} \right)\right]^{\frac{1}{2}} = g,
\end{equation*}
that is easily checked by computations.

\smallskip
Whenever all the reactions $\mathcal{S}_1 + \mathcal{S}_2 \leftrightharpoons \mathcal{S}_3 + \mathcal{S}_4$ are well-defined, i.e. when $g^2 \geq 2 E_{12}^{34} / m_{12}$, the structure of the chemical cross-sections $B_{ij}^{hk}$ is prescribed by the following assumptions.

\smallskip
\begin{enumerate}[leftmargin=1.5cm]
\item[(CH1)] They satisfy an indistinguishability condition in the interchange of the indices $i \leftrightarrow j$ and $h \leftrightarrow k$, namely
\begin{equation*}
B_{ij}^{hk}(g, \cos \theta) = B_{ji}^{kh}(g, \cos \theta), \quad \forall v, v_* \in \R^3, \ \forall \theta \in [0,\pi].
\end{equation*}
\item[(CH2)] They decompose into the product of a kinetic part $\Phi_{ij}^{hk}\geq 0$ and an angular part $b_{ij}^{hk} \geq 0$, namely
\begin{equation*}
B_{ij}^{hk}(g, \cos \theta) = \Phi_{ij}^{hk}(g)b_{ij}^{hk}(\cos \theta), \quad \forall v, v_* \in \R^3, \ \forall \theta \in [0,\pi].
\end{equation*}
\item[(CH3)] The kinetic part has the form of reactive Maxwellian or hard-potential interactions, i.e. for any $\gamma \in [0,1]$,
\begin{equation*}
\Phi_{ij}^{hk}(g) = C_{ij}^{hk} \frac{\left( g_{ij}^{hk} \right)^{\frac{\gamma + 1}{2}}}{g^{\frac{1-\gamma}{2}}},\quad C_{ij}^{hk} > 0, \quad C_{ij}^{hk} m_{ij}^2 = C_{hk}^{ij} m_{hk}^2, \quad \forall v, v_* \in \R^3.
\end{equation*}
\item[(CH4)] The angular part is assumed to be positive for a.e. $\theta \in [0,\pi]$ and to satisfy the condition $b_{ij}^{hk}(\cos \theta) = b_{hk}^{ij}(\cos \theta)$. Moreover, there exists a constant $C >0$ such that
\begin{equation*}
b_{ij}^{hk}(\cos \theta) \leq C |\cos \theta| |\sin \theta| \quad \forall \theta \in [0,\pi].
\end{equation*}
\end{enumerate}
The first hypothesis (CH1) translates part of the micro-reversibility property for reactions and is very natural. The second one (CH2) provides a simple way to exhibit an explicit structure for the chemical kernels. Assumption (CH3) translates, in the reactive framework, the proper shape of Maxwell and inverse-power interactions. Moreover, the condition relating the constants $C_{ij}^{hk}$ and the reduced masses allows to recover the micro-reversibility \eqref{eq:micro-reversibility}, in combination with the symmetries imposed on the angular part in (CH4). Finally, condition (CH4) translates a cutoff property ensuring, like in the elastic case, that $b_{ij}^{hk}$ is integrable over $\Sf$.

\begin{remark} \label{remark2}
    Unlike the elastic setting where the indistinguishability condition (EL1) alone guarantees that the collision kernels $B_{ij}$ have enough symmetries to link the operators $Q_{ij}^\EL$ and $Q_{ji}^\EL$, the corresponding condition (CH1) on the chemical collision kernels $B_{ij}^{hk}$ is not sufficient to pass from $Q_1^\CH$ to $Q_3^\CH$ and one needs to impose the additional micro-reversibility relation \eqref{eq:micro-reversibility}. Here, our goal is to characterize an explicit form of hard-potential-type kernels $B_{ij}^{hk}$ that naturally satisfy condition \eqref{eq:micro-reversibility} and we have done so by exploiting the symmetry properties of the pre and post-collisional relative velocities to deduce (CH3), which prescribes an explicit structure for the kinetic part. If one wanted to be less restrictive on the chemical kernels and consider more general hypotheses like (EL3') and (EL5) in the elastic framework (see our discussion in Remark \ref{remark1}), it would then be necessary to enforce \eqref{eq:micro-reversibility}. Specifically, we could replace assumption (CH3) on the kinetic part $\Phi_{ij}^{hk}(g)$ with the following:
    \begin{enumerate}[leftmargin=1.5cm]
    \item[(CH3')] There exist three constants $\underline{C}_{ij}^{hk}, \overline{C}_{ij}^{hk} > 0$ and $\gamma_{ij}^{hk} \in [0,1]$ such that 
    \begin{equation*}
    \underline{C}_{ij}^{hk} g^{\gamma_{ij}^{hk}} \leq \Phi_{ij}^{hk}(g) \leq \overline{C}_{ij}^{hk} \big(1 + g\big), \quad \forall v, v_* \in \R^3.
    \end{equation*}
    \end{enumerate}
    Then, the chemical collision frequencies $(\nu_i^\CH)_{1\leq i \leq 4}$ given by \eqref{eq:nu CH} should be controlled by their elastic counterparts $(\nu_i^\EL)_{1\leq i \leq 4}$ given by \eqref{eq:nu EL}, through the additional condition:
    \begin{enumerate}[leftmargin=1.5cm]
    \item[(CH5)] There exists a constant $\beta^\CH > 0$ such that, for any $1\leq i \leq 4$, it holds
    \begin{equation*}
        \nu_i^\CH(v) \leq \beta^\CH \nu_i^\EL(v), \quad \forall v \in \R^3.
    \end{equation*}
    \end{enumerate}
    At last, as pointed out above, we must make sure that micro-reversibility holds. Because of (CH4), this is verified as soon as:
    \begin{enumerate}[leftmargin=1.5cm]
    \item[(CH6)] The functions $\Phi_{ij}^{hk}$ satisfy condition \eqref{eq:micro-reversibility}, namely
    \begin{equation*}
        g \Phi_{ij}^{hk}(g) = \left( \frac{m_{hk}}{m_{ij}} \right)^2 g_{ij}^{hk} \Phi_{hk}^{ij}(g_{ij}^{hk}), \quad \forall v, v_* \in \R^3.
    \end{equation*}
    \end{enumerate}
    Like with the elastic case, the additional hypothesis (CH5) essentially translates a property of equivalence between chemical and elastic collision frequencies, ensuring that the spectral gap of $\mathbf{L}^\EL$ can be used to control the Dirichlet form of $\mathbf{L}^\CH$. Here, we chose to follow a simpler approach by considering the slightly more restrictive condition (CH3), in place of (CH3')--(CH5)--(CH6), to guarantee that $\nu_i^\CH(v) \sim \nu_i^\EL(v) \sim 1+|v|^\gamma$ for any $1 \leq i \leq 4$ and make the estimates more transparent.
\end{remark}

\medskip
\noindent \textbf{Collisional invariants and global equilibria.} The following are well-known conservation properties satisfied by the Boltzmann operators $\mathbf{Q}^\EL$ and $\mathbf{Q}^\CH$ \cite{BriDau,CerIllPul,DauJunMouZam,DesMonSal,RosSpi}. Let us consider any vector-valued function $\pmb{\psi} = (\psi_1, \ldots, \psi_4) : \R^3 \to \R^4$ for which the integrals in the equalities below are well-defined. Thanks to intrinsic symmetries of the elastic part, by denoting $\kappa = \frac{v-v_*}{|v-v_*|}$, one can apply standard changes of variable $(v,v_*,\sigma) \mapsto (v^{\prime},v_*^{\prime},\kappa)$ and $(v,v_*,\sigma) \mapsto (v_*,v,-\sigma)$ to prove that the operators $Q_i^\EL(\mathbf{F}, \mathbf{F})$ satisfy the weak formulation
\begin{multline} \label{eq:weak formulation EL}
\sum_{i=1}^4 \int_{\R^3} Q_i^\EL(\mathbf{F}, \mathbf{F})(v) \psi_i(v) \dd v = - \frac{1}{4} \sum_{i=1}^4 \sum_{j=1}^4\int_{\R^6 \times \Sf} B_{ij}(|v - v_*|, \cos \theta) \big( F_i^\prime F_j^{\prime *} - F_i F_j^* \big)
\\[4mm]    \times \big( \psi_i(v^\prime) + \psi_j(v_*^\prime) - \psi_i(v) - \psi_j(v_*) \big) \dd v_* \dd v \dd \sigma.
\end{multline}
The micro-reversibilities satisfied by the chemical component used in tandem with a combination of changes of variables of the form $(v,v_*,\sigma) \mapsto (v_{ij}^{hk},v_{* ij}^{\ hk},\kappa)$ and $(v,v_*,\sigma) \mapsto (v_*,v,-\sigma)$, depending on the combinations of indices $(i, j, h, k)$, allows to argue in a similar way \cite{GroPol} that the operators $Q_i^\CH(\mathbf{F}, \mathbf{F})$ satisfy the weak formulation
\begin{multline} \label{eq:weak formulation CH}
\sum_{i=1}^4 \int_{\R^3} Q_i^\CH(\mathbf{F}, \mathbf{F})(v) \psi_i(v) \dd v = - \int_{\R^6 \times \Sf} H \left( |v - v_*|^2 - \frac{2 E_{12}^{34}}{m_{12}} \right) B_{12}^{34}(|v - v_*|, \cos \theta)
\\[4mm]    \times \left[ \left( \frac{m_{12}}{m_{34}} \right)^3 F_3(v^\prime) F_4 (v^\prime_*) - F_1(v) F_2(v_*) \right]  \big( \psi_3(v^\prime) + \psi_4(v^\prime_*) - \psi_1(v) - \psi_2(v_*) \big) \dd v_* \dd v \dd \sigma,
\end{multline}
where $v^\prime = v_{12}^{34}$ and $v^\prime_* = v_{* 12}^{\ 34}$ are the reference post-collisional velocities of the reaction $\mathcal{S}_1 + \mathcal{S}_2 \leftrightharpoons \mathcal{S}_3 + \mathcal{S}_4$, being the sole needed to characterize the conservation laws associated with $\mathbf{Q}^\CH$. Additionally, for the chemical operator it is also possible to prove \cite{RosSpi} that its components satisfy the following fundamental mass-conservation laws
\begin{equation} \label{eq:mass conservation CH}
\int_{\R^3} Q_1^\CH(\mathbf{F}, \mathbf{F})(v) \dd v = \int_{\R^3} Q_2^\CH(\mathbf{F}, \mathbf{F})(v) \dd v = -\int_{\R^3} Q_3^\CH(\mathbf{F}, \mathbf{F})(v) \dd v = -\int_{\R^3} Q_4^\CH(\mathbf{F}, \mathbf{F})(v) \dd v.
\end{equation}
%
Combining the weak formulations \eqref{eq:weak formulation EL} and \eqref{eq:weak formulation CH}, one deduces the conservation properties of the multi-species reactive Boltzmann operator $\mathbf{Q}$. More precisely, using the microscopic conservation of momentum and energy satisfied by the elastic collisions \eqref{eq:conservation EL} and by the chemical reactions \eqref{eq:conservation CH}, the conservation properties of $\mathbf{Q}$ are given by
\begin{equation} \label{eq:collisional invariants}
\begin{split}
    & \sum_{i=1}^4 \int_{\R^3} Q_i^\EL(\mathbf{F},\mathbf{F})(v) \psi_i(v) \dd v = 0, \\[2mm]
    & \sum_{i=1}^4 \int_{\R^3} Q_i^\CH(\mathbf{F},\mathbf{F})(v) \psi_i(v) \dd v = 0,
\end{split}
\end{equation}
and these equalities hold simultaneously if and only if $\pmb{\psi}$ is a collision invariant of the reactive mixture, namely 
\begin{equation*}
\pmb{\psi} \in \textrm{Span} \left\{ \mathbf{e}_{13}, \mathbf{e}_{14}, \mathbf{e}_{24}, v_1 \mathbf{m}, v_2 \mathbf{m}, v_3 \mathbf{m}, \frac{1}{2} |v|^2 \mathbf{m} + \mathbf{E} \right\},
\end{equation*}
where we have denoted $\mathbf{e}_{13} = (1, 0, 1, 0)$, $\mathbf{e}_{14} = (1, 0, 0, 1)$, $\mathbf{e}_{24} = (0, 1, 0, 1)$, $\mathbf{m} = (m_1, \ldots, m_4)$ and $\mathbf{E} = (E_1, \ldots, E_4)$. Here, the vectors $\mathbf{e}_{13}$, $\mathbf{e}_{14}$ and $\mathbf{e}_{24}$ prescribe the conservation over time $t \geq 0$ of the partial densities $c_{1, \infty} + c_{3, \infty}$, $c_{1, \infty} + c_{4, \infty}$ and $c_{2, \infty} + c_{4, \infty}$ within the chemical reactions (this property is clearly visible from relations \eqref{eq:mass conservation CH}), where $c_{i, \infty}$ denotes the concentration of the species $\mathcal{S}_i$. Similarly, the collision invariants $v_1 \mathbf{m}$, $v_2 \mathbf{m}$ and $v_3 \mathbf{m}$ give the conservation of total momentum inside the reactive mixture, while $\frac{1}{2} |v|^2 \mathbf{m} + \mathbf{E}$ ensures the conservation of total (kinetic and chemical, or internal) energy. In particular, the Boltzmann equation \eqref{eq:reactive BE 2}, the invariance of mass $m_1 + m_2 = m_3 + m_4$ in the reaction \eqref{eq:reaction} and relations \eqref{eq:collisional invariants} imply that the quantities
\begin{equation} \label{eq:conserved quantities}
\begin{aligned}
c_{\infty} = \sum_{i=1}^{4} \int_{\R^3 \times \T^3} F_i(t,x,v) \dd v \dd x, & \qquad \rho_{\infty} u_{\infty} =  \sum_{i=1}^4 \int_{\R^3 \times \T^3} m_i v F_i(t,x,v) \dd v \dd x
\\[4mm]    \frac{3}{2} K_B \rho_{\infty} T_{\infty} = \sum_{i=1}^4 \int_{\R^3 \times \T^3} & \left( \frac{1}{2} m_i |v - u_{\infty}|^2 + E_i \right) F_i(t,x,v) \dd v \dd x,
\end{aligned}
\end{equation}
are preserved for any time $t \geq 0$. Here, $c_{\infty} = \sum_{i=1}^4 c_{i, \infty}$ stands for the total concentration of the mixture, while $\rho_{\infty} = \sum_{i=1}^4 m_i c_{i,\infty}$ defines the total density of the mixture, $\rho_{\infty}u_{\infty}$ its total momentum and $\frac{3}{2} K_B \rho_{\infty} T_{\infty}$ its total energy.

\smallskip
With these global quantities preserved by the kinetic equation, one can deduce an $H$-theorem \cite[Proposition 1]{DesMonSal} for the combination of the operators $\mathbf{Q}^\EL$ and $\mathbf{Q}^\CH$. While the inert component pushes the solution of the Boltzmann equation to relax toward a collisional equilibrium having the shape of a local Maxwellian, the chemical component imposes a further condition binding together the corresponding local concentrations, local temperature, molecular masses and impinging energy of the reaction. Here we are interested in the relaxation toward a global thermodynamic steady state, where the locality in the space variable of the physical quantities associated with the Maxwellian is lost through the action of the transport operator over $\T^3$. The $H$-theorem then implies that the only steady state solution of the equations $\mathbf{Q}(\mathbf{F}, \mathbf{F}) = 0$ and $v \cdot \nabla_x \mathbf{F} = 0$ is the distribution function $\mathbf{F} = (F_1, \ldots, F_4)$, whose components are global Maxwellians of the form
\begin{equation*}
F_i(v) = c_{i, \infty} \left( \frac{m_i}{2 \pi K_B T_\infty} \right)^{3/2} \exp \left( - m_i \frac{|v - u_\infty|^2}{2 K_B T_\infty} \right), \quad v \in \R^3, \ 1 \leq i \leq 4,
\end{equation*}
where the values of the global quantities $(c_{i, \infty})_{1 \leq i \leq 4}$, $u_\infty$ and $T_\infty$ are prescribed by the initial conditions of the Boltzmann equation \eqref{eq:reactive BE 2} via the conservations \eqref{eq:conserved quantities}, and are subject to the additional constraint $\frac{c_{1, \infty} c_{2, \infty}}{c_{3, \infty} c_{4, \infty}} = \left( \frac{m_3 m_4}{m_1 m_2} \right)^{3/2} \exp \left( \frac{E_{12}^{34}}{K_B T_{\infty}} \right)$. In particular, one can perform a suitable translation and dilation of the coordinate system in order to reduce the analysis to the case $u_\infty = 0$ and $K_B T_\infty = 1$. In this way, we are led to introduce the sole global equilibrium of the reactive mixture to be the global Maxwellian distribution $\pmb{\mu} = (\mu_1, \ldots, \mu_4)$ having, for any $1 \leq i \leq 4$ and $v \in \R^3$, the following shape
\begin{equation} \label{eq:global equilibrium}
\mu_i(v) = c_{i, \infty} \left( \frac{m_i}{2 \pi} \right)^{3/2} e^{- m_i \frac{|v|^2}{2}},
\end{equation}
with the global concentrations $(c_{i, \infty})_{1 \leq i \leq 4}$ satisfying the mass action law
\begin{equation} \label{eq:MAL 2}
\frac{c_{1, \infty} c_{2, \infty}}{c_{3, \infty} c_{4, \infty}} = \left( \frac{m_3 m_4}{m_1 m_2} \right)^{3/2} e^{E_{12}^{34}}.
\end{equation}
In what follows, we shall always work in the Hilbert space $L^2(\R^3, \pmb{\mu}^{-1/2})$ weighted by the global equilibrium $\pmb{\mu}^{-1/2} = \big(\mu_1^{-1/2}, \ldots, \mu_4^{-1/2}\big)$.

\medskip
\noindent \textbf{Linearized setting.}  Following the methods of the linearized theory to tackle the problem of convergence to equilibrium for the reactive Boltzmann equation, the natural question arising from these considerations is to study the properties of solutions to \eqref{eq:reactive BE 2} in a regime close to the global distribution \eqref{eq:global equilibrium}--\eqref{eq:MAL 2}. To this aim, we write $\mathbf{F} = \pmb{\mu} + \mathbf{f}$ and investigate the behavior of small perturbations $\mathbf{f}$ around the equilibrium $\pmb{\mu}$, solving the perturbed Boltzmann system
\begin{equation} \label{eq:linearized BE}
\partial_t \mathbf{f} + v \cdot \nabla_x \mathbf{f} = \mathbf{L}(\mathbf{f}) + \mathbf{Q}(\mathbf{f}, \mathbf{f}),
\end{equation}
where we have used the equality $\mathbf{Q}(\pmb{\mu}, \pmb{\mu}) = 0$, since by construction $\pmb{\mu}$ ensures the cancellation of both the inert and the chemical collision operators. In this perturbative setting, one expects the first term on the right-hand side to be the dominant one, driving the relaxation of solutions $\mathbf{f}$ toward the equilibrium $\pmb{\mu}$. It is the so-called linearized reactive Boltzmann operator $\mathbf{L} = (L_1, \ldots, L_4)$, given component-wise by
\begin{equation} \label{eq:operator L}
L_i(\mathbf{f}) = L_i^\EL(\mathbf{f}) + L_i^\CH(\mathbf{f}) = \Big[ Q_i^\EL(\pmb{\mu}, \mathbf{f}) + Q_i^\EL(\mathbf{f}, \pmb{\mu}) \Big] + \Big[ Q_i^\CH(\pmb{\mu}, \mathbf{f}) + Q_i^\CH(\mathbf{f}, \pmb{\mu}) \Big], \quad 1 \leq i \leq 4,
\end{equation}
where the elastic part $\mathbf{L}^\EL$, acting on $v \in \R^3$, reads for any $1 \leq i \leq 4$
\begin{equation*}
L_i^\EL(\mathbf{f})(v) = \sum_{j=1}^4 \int_{\R^3\times\Sf} B_{ij}(|v - v_*|, \cos \theta)( f_i^\prime \mu_j^{\prime *} + f_j^{\prime *} \mu_i^\prime -  f_i \mu_j^* - f_j^{\prime *} \mu_i )\dd v_* \dd \sigma,
\end{equation*}
and the linearized chemical operator $\mathbf{L}^\CH$, also acting on $v \in \R^3$, can be written in compact form, for any reaction $\mathcal{S}_i + \mathcal{S}_j \rightarrow \mathcal{S}_h + \mathcal{S}_k$, as
\begin{multline*}
L_i^\CH(\mathbf{f})(v) = \int_{\R^3 \times \Sf} H \left( |v - v_*|^2 - \frac{2 E_{ij}^{hk}}{m_{ij}} \right) B_{ij}^{hk}(|v - v_*|, \cos \theta)
\\[2mm]  \times \left[ \left( \frac{m_{ij}}{m_{hk}} \right)^3 f_h(v^\prime) \mu_k(v^{\prime*}) + \left( \frac{m_{ij}}{m_{hk}} \right)^3 f_k(v^{\prime*}) \mu_h(v^{\prime}) - f_i(v) \mu_j(v_*) - f_j(v_*) \mu_i(v) \right] \dd v_* \dd \sigma.
\end{multline*}
The first step to obtain the convergence to equilibrium in such framework consists in analyzing the spectral properties of $\mathbf{L}$. In particular, one wishes to prove that the first nonzero eigenvalue of the linearized operator is negative and bounded away from zero, allowing to infer (hypo)coercivity estimates for $\mathbf{L}$ which can be used to control the nonlinear operator $\mathbf{Q}$. Results about the kernel and spectrum of the linearized inert operator $\mathbf{L}^\EL$ have already been obtained in recent years \cite{BriDau,BonBouBriGre,DauJunMouZam}. Here, we provide a complementary study on the linearized chemical operator $\mathbf{L}^\CH$ by showing that it acts as a perturbation to the elastic component, in the sense that its associated Dirichlet form can be bounded above by a negative quantity that corrects the spectral gap of $\mathbf{L}^\EL$. The estimates derived in this work are explicit and constitute an essential result to develop a quantitative Cauchy theory of perturbative solutions to \eqref{eq:linearized BE}, as well as to treat the rigorous  derivation of hydrodynamic limits starting from the reactive Boltzmann equation.


\section{Main result}

\noindent In this section we present the main result of this work, providing a quantitative estimate for the spectral gap of the linearized reactive operator $\mathbf{L}$. In order to state our theorem, we shall first recall well-known structural properties for the kernel of $\mathbf{L}^\EL$ as well as a recent result \cite{BriDau} ensuring the existence of a spectral gap $\lambda_\EL$ for this elastic operator in the space $L^2(\R^3, \pmb{\mu}^{-1/2})$. We proceed with a similar investigation on $\mathbf{L}^\CH$, by determining its kernel and a corresponding basis. This preliminary analysis will provide us with the basic tools needed in the following section, where we will prove that the Dirichlet form associated with the operator $\mathbf{L}^\EL + \mathbf{L}^\CH$ in the space $L^2(\R^3, \pmb{\mu}^{-1/2})$ is upper-bounded by an explicit negative term depending solely on $\lambda_\EL$ and the parameters of the problem.

\medskip
\noindent \textbf{Notations.} We begin by collecting here the notations used from now on. For any positive measurable vector-valued function $\mathbf{W} = (W_1, \ldots, W_4) : \R^3 \to (\R_+^*)^4$ in the variable $v$, we define the weighted Hilbert spaces $L^2(\R^3, W_i)$, $1\leq i \leq 4$, by introducing the scalar products and norms
\begin{equation*}
\langle f_i, g_i \rangle_{L_v^2(W_i)} = \int_{\R^3} f_i g_i W_i^2 \dd v, \quad \| f_i \|^2_{L_v^2(W_i)} = \langle f_i, f_i \rangle_{L_v^2(W_i)}, \quad \forall f_i, g_i \in L^2(\R^3, W_i).
\end{equation*}
With this definition, we say that $\mathbf{f} = (f_1, \ldots, f_4) : \R^3 \to \R^4 \in L^2(\R^3, \mathbf{W})$ if $f_i : \R^3 \to \R$ belongs to $L^2(\R^3, W_i)$ and we associate to $L^2(\R^3, \mathbf{W})$ the scalar product and norm
\begin{equation*}
\langle \mathbf{f}, \mathbf{g} \rangle_{L_v^2(\mathbf{W})} = \sum_{i=1}^4 \langle f_i, g_i \rangle_{L_v^2(W_i)}, \quad \| \mathbf{f} \|_{L_v^2(\mathbf{W})} = \left( \sum_{i=1}^4 \| f_i \|^2_{L_v^2(W_i)} \right)^{1/2}.
\end{equation*}
We also recall that our following analysis is performed using the weighted Hilbert space $L^2(\R^3, \pmb{\mu}^{-1/2})$, which is associated with the global Maxwellian weight $\pmb{\mu}^{-1/2} = \big( \mu_1^{-1/2}, \ldots, \mu_4^{-1/2} \big)$. In particular, we will often employ the notation $\mathbf{h} = (h_1, \ldots, h_4)$ to indicate the distribution $\mathbf{f}$ whenever it is rescaled by the weight $\pmb{\mu}^{-1}$, meaning that $h_i := f_i \mu_i^{-1}$ for any $1 \leq i \leq 4$. At last, for the sake of simplicity, we introduce the standard shorthand notation
\begin{equation*}
\langle v \rangle = \Big( 1 + |v|^2 \Big)^{1/2}.
\end{equation*}

\medskip
\noindent \textbf{Known properties of the elastic component $\mathbf{L}^\EL$.} The linearized Boltzmann multi-species operator $\mathbf{L}^\EL$, relative to the mixture's global equilibrium $\pmb{\mu}$ defined by \eqref{eq:global equilibrium}--\eqref{eq:MAL 2}, is a closed self-adjoint nonpositive operator in $L^2(\R^3, \pmb{\mu}^{-1/2})$, whose Dirichlet form satisfies the relation
\begin{equation} \label{eq:Dirichlet form EL}
\begin{split}
\scalprod{\mathbf{L}^\EL(\mathbf{f}), \mathbf{f}}_{\spaceV} = -\frac{1}{4} \sum_{i,j=1}^4 \int_{\R^6\times\Sf} & B_{ij}(|v-v_*|, \cos \theta) \mu_i(v) \mu_j(v_*) \\[1mm]
& \times \left[ h_i(v^\prime) + h_j(v_*') - h_i(v) - h_j(v_*)\right]^2 \dd v \dd v_* \dd \sigma,
\end{split}
\end{equation}
where we recall that $h_i = f_i \mu_i^{-1}$. The latter is determined by directly applying the weak formulation \eqref{eq:weak formulation EL} of $\mathbf{Q}^\EL$ in linearized form, with $\psi_i = f_i$ for any $1 \leq i \leq 4$.

\smallskip
Moreover, the kernel $\Ker (\mathbf{L}^\EL)$ of the elastic component is spanned by the orthonormal basis $\big( \pmb{\phi}^\EL_k \big)_{1\leq k \leq 8}$ in $L^2(\R^3, \pmb{\mu}^{-1/2})$ given by
\begin{equation} \label{eq:orthonormal basis EL}
\left\{\begin{array}{ll}
\displaystyle \pmb{\phi}^\EL_k = \frac{1}{\sqrt{c_{k,\infty}}} \mu_k \mathbf{e}_k, \quad 1 \leq k \leq 4, \\[2mm]
\displaystyle \pmb{\phi}^\EL_{4+\ell} = \frac{v_\ell}{\left( \sum_{j=1}^4 m_j c_{j,\infty} \right)^{1/2}} \big( m_i \mu_i \big)_{1\leq i\leq 4}, \quad 1\leq \ell \leq 3,\\
\displaystyle \pmb{\phi}^\EL_8 = \frac{1}{\left( \sum_{j=1}^4 c_{j,\infty} \right)^{1/2}} \left( \frac{m_i |v|^2 - 3}{\sqrt{6}} \mu_i \right)_{1\leq i\leq 4}.
\end{array}\right.
\end{equation}
where $(\mathbf{e}_i)_{1\leq i\leq 4}$ denotes the canonical basis of $\R^4$. In particular, for any $\mathbf{f}\in L^2(\R^3, \pmb{\mu}^{-1/2})$, we can define its orthogonal projection onto $\Ker (\mathbf{L}^\EL)$ in $L^2(\R^3, \pmb{\mu}^{-1/2})$ with respect to this basis as
\begin{equation} \label{eq:projection EL}
[\pi^\EL(\mathbf{f})]_i(t,x,v) = \left( n_i(t,x) + m_i u(t,x) \cdot v + \frac{m_i}{2} e(t,x) |v|^2 \right) \mu_i(v), \quad 1 \leq i \leq 4,
\end{equation}
where $n_i \in \R$, $u \in \R^3$ and $e \in \R$ denote the corresponding coordinates. Finally, the elastic operator $\mathbf{L}^\EL$ satisfies the following fundamental property, proved in \cite{BriDau,DauJunMouZam}.

\begin{theorem}[From \cite{BriDau}, Theorem 2.1 (i)] \label{theorem:elastic SG}
Under the assumptions {\normalfont (EL1)--(EL4)}, the linearized elastic operator $\mathbf{L}^\EL$ has an explicit spectral gap, i.e. there exists an explicit constant $\lambda_\EL > 0$ such that, for all $\mathbf{f}\in L^2(\R^3, \pmb{\mu}^{-1/2})$,
\begin{equation} \label{eq:elastic SG}
\big\langle \mathbf{L}^\EL(\mathbf{f}), \mathbf{f} \big\rangle_{\spaceV} \leq -\lambda_\EL \norm{\mathbf{f}-\pi^\EL(\mathbf{f})}_{\spaceVgamma}^2,
\end{equation}
for any $\gamma \in [0,1]$. The constant $\lambda_\EL$ depends only on the different masses $(m_i)_{1\leq i\leq 4}$ and on the collision kernels $(B_{ij})_{1 \leq i, j \leq 4}$.
\end{theorem}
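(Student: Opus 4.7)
The plan is to reduce the multi-species estimate to the constructive monospecies spectral gap of Baranger--Mouhot and Mouhot, and then capture the extra rigidity coming from the fact that $\Ker(\mathbf{L}^\EL)$ is strictly smaller than $\bigoplus_{i=1}^4 \Ker(L_{ii}^\EL)$ through the cross-species terms. Starting from the nonnegative quadratic representation in \eqref{eq:Dirichlet form EL}, I would split the sum into the intra-species block $D^{\mono}(\mathbf{f})$ collecting the diagonal $i=j$ contributions and the cross-species block $D^{\cross}(\mathbf{f})$ gathering the $i\ne j$ ones, both being nonnegative.

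For $D^{\mono}$, each $B_{ii}$ satisfies (EL1)--(EL4) and $L_{ii}^\EL$ is the standard single-species linearized Boltzmann operator at mass $m_i$ around $\mu_i$. Mouhot's explicit monospecies spectral gap then yields constants $\lambda_i>0$ with
\[
D^{\mono}(\mathbf{f}) \ge \sum_{i=1}^4 \lambda_i \norm{f_i - P_i f_i}^2_{L^2_v(\langle v\rangle^{\gamma/2} \mu_i^{-1/2})},
\]
where $P_i$ denotes the $L^2(\mu_i^{-1/2})$-projection onto $\Span\{1, v_1, v_2, v_3, |v|^2\}\mu_i$. This dominates $\norm{\mathbf{f} - \pi^\EL(\mathbf{f})}_{\spaceVgamma}^2$ on the orthogonal complement of $\mathcal{M} := \big(\bigoplus_i\Ker(L_{ii}^\EL)\big)\cap \Ker(\mathbf{L}^\EL)^\perp$, but is empty on $\mathcal{M}$ itself.

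The heart of the argument is then to coercively control $D^{\cross}$ on $\mathcal{M}$. Decomposing $\mathbf{f} - \pi^\EL(\mathbf{f}) = \mathbf{g} + \mathbf{r}$ with $\mathbf{g}\in\mathcal{M}$ and $\mathbf{r} \perp \bigoplus_i \Ker(L_{ii}^\EL)$, each $g_i$ has the form $(b_i\cdot v + c_i|v|^2)\mu_i$ since the per-species concentration modes are killed by $\pi^\EL$ in \eqref{eq:projection EL}. Orthogonality of $\mathbf{g}$ to the basis \eqref{eq:orthonormal basis EL} amounts to the four constraints $\sum_i c_{i,\infty} b_{i,\ell} = 0$ for $\ell=1,2,3$ and $\sum_i (c_{i,\infty}/m_i) c_i = 0$, giving $\dim\mathcal{M} = 12$. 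Expanding the cross integrand of \eqref{eq:Dirichlet form EL} at $\mathbf{g}$ and using the elastic collision rules, each pair $(i,j)$ with $i\ne j$ produces a quadratic form in the species-mismatches of $b_i$ and $c_i$ that vanishes exactly on $\Ker(\mathbf{L}^\EL)$. A finite-dimensional positivity argument on $\mathcal{M}$, together with positivity of $B_{ij}$ and of the angular integrals in (EL4), then yields an explicit $\kappa>0$ such that $D^{\cross}(\mathbf{g}) \ge \kappa \norm{\mathbf{g}}_{\spaceVgamma}^2$.

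To conclude, I would write $-\scalprod{\mathbf{L}^\EL (\mathbf{f}), \mathbf{f}}_{\spaceV} \ge \theta D^{\mono}(\mathbf{f}) + (1-\theta) D^{\cross}(\mathbf{f})$ for some $\theta\in(0,1)$, noting that the cross term expanded in $\mathbf{g}+\mathbf{r}$ generates mixed contributions that must be absorbed into the already-positive pieces via Cauchy--Schwarz with a small parameter $\eps$, and then optimize $\theta$ and $\eps$ to obtain the claimed estimate with $\lambda_\EL$ expressed explicitly in terms of the $\lambda_i$, the constant $\kappa$, the reduced masses $m_{ij}$, and the $C_{ij}$ from (EL3). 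The main obstacle is the quantitative determination of $\kappa$: it amounts to bounding below the smallest nonzero eigenvalue of a finite-dimensional coupling matrix whose entries are explicit velocity integrals of $B_{ij}$ against $\mu_i\mu_j$, and verifying that this matrix remains nondegenerate on $\mathcal{M}$ uniformly for all admissible choices of mixture parameters.
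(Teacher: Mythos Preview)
The paper does not prove Theorem~\ref{theorem:elastic SG}; it is quoted as a known result from \cite{DauJunMouZam, BriDau}, and the paper only alludes to its proof strategy when motivating the proof of Theorem~\ref{theorem:reactive SG} (``$\mathbf{L}^\EL$ can be split into the sum $\mathbf{L}^\EL_\mono + \mathbf{L}^\EL_\cross$ \ldots such that $\mathbf{L}^\EL_\mono$ has an explicit spectral gap and $\mathbf{L}^\EL_\cross$ is nonpositive''). Your sketch follows exactly that strategy: the diagonal/off-diagonal decomposition of \eqref{eq:Dirichlet form EL}, the constructive monospecies gap of Baranger--Mouhot/Mouhot on the diagonal, and the explicit finite-dimensional coercivity of the cross block on $\mathcal{M} = \big(\bigoplus_i \Ker(L_{ii}^\EL)\big)\cap \Ker(\mathbf{L}^\EL)^\perp$, combined via a convex split and a Cauchy--Schwarz absorption of the mixed terms. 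So your approach is the intended one.

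One correction worth flagging: your parametrization of $\mathcal{M}$ is slightly off. You claim $g_i = (b_i\cdot v + c_i|v|^2)\mu_i$ because ``the per-species concentration modes are killed by $\pi^\EL$'', but orthogonality of $\mathbf{g}$ to $\mu_i\mathbf{e}_i$ means $\int_{\R^3} g_i\,\dd v = 0$, which for a general element $(a_i + b_i\cdot v + c_i|v|^2)\mu_i$ of $\Ker(L_{ii}^\EL)$ forces $a_i = -3c_i/m_i$, not $a_i=0$. Thus $g_i = \big(b_i\cdot v + c_i(|v|^2 - 3/m_i)\big)\mu_i$, and your form with $a_i=0$ does \emph{not} lie in $\Ker(\mathbf{L}^\EL)^\perp$ unless all $c_i$ vanish. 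The dimension count $\dim\mathcal{M}=12$ and the momentum/energy constraints you wrote are nonetheless correct once this is fixed, and the rest of the argument (positive-definiteness of $D^{\cross}$ on $\mathcal{M}$ by the characterization of $\Ker(\mathbf{L}^\EL)$, explicit lower bound via finite-dimensional equivalence of norms, absorption of the $\mathbf{r}$-remainder) goes through as you outline.
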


\medskip
\noindent \textbf{Properties of the linearized chemical operator $\mathbf{L}^\CH$.} We may prove in a very similar way that $\mathbf{L}^\CH$ is a closed, self-adjoint and nonpositive operator in the same space $L^2(\R^3, \pmb{\mu}^{-1/2})$. In particular, the self-adjointness follows by considering the linearized version of the weak formulation \eqref{eq:weak formulation CH} of $\mathbf{Q}^\CH$ with the choice $\psi_i = g_i$, $1 \leq i \leq 4$, and $\mathbf{g} \in L^2(\R^3, \pmb{\mu}^{-1/2})$. This, combined with  the fact that $\bm{\mu}$ is the global chemical equilibrium of the mixture, thus it satisfies the mass action law \eqref{eq:MAL 2} and the relation
\begin{equation*}
\left( \frac{m_1 m_2}{m_3 m_4} \right)^3 \mu_3(v^\prime) \mu_4(v^\prime_*) = \mu_1(v) \mu_2(v_*),
\end{equation*}
allows to show that
\begin{equation*}
\begin{split}
\scalprod{\mathbf{L}^\CH(\mathbf{f}), \mathbf{g}}_{\spaceV} & = - \int_{\R^6\times\Sf} H\left( |v-v_*|^2 - \frac{2 E}{m_{12}} \right) B_{12}^{34}(|v-v_*|, \cos \theta) \mu_1 \mu_2^*  \\[2mm]
& \hspace*{2cm} \times \left[ \frac{f_3'}{\mu'_3} + \frac{f^{' *}_4}{\mu^{' *}_4} - \frac{f_1}{\mu_1} - \frac{f^*_2}{\mu^*_2}\right] \left[ \frac{g_3'}{\mu'_3} + \frac{g^{' *}_4}{\mu^{' *}_4} - \frac{g_1}{\mu_1} - \frac{g^*_2}{\mu^*_2}\right] \dd v \dd v_* \dd \sigma \\[6mm]
& = - \int_{\R^6\times\Sf} H\left( |v-v_*|^2 - \frac{2 E}{m_{12}} \right) B_{12}^{34}(|v-v_*|, \cos \theta) \left[ \frac{f_3'}{\mu'_3} + \frac{f^{' *}_4}{\mu^{' *}_4} - \frac{f_1}{\mu_1} - \frac{f^*_2}{\mu^*_2}\right]  \\[2mm]
& \hspace*{2cm} \times \left[ \left( \frac{m_{12}}{m_{34}} \right)^3 g_3' \mu^{'*}_4 + \left( \frac{m_{12}}{m_{34}} \right)^3 g^{' *}_4 \mu'_3 - g_1 \mu^*_2 - g^*_2 \mu_1 \right] \dd v \dd v_* \dd \sigma, \\[8mm]
& = \scalprod{ \mathbf{f}, \mathbf{L}^\CH(\mathbf{g})}_{\spaceV}.
\end{split} 
\end{equation*}
From the relation above, one also recovers the nonpositivity of $\mathbf{L}^\CH$ by taking $\mathbf{g} = \mathbf{f}$, so that
\begin{multline} \label{eq:Dirichlet form CH}
\scalprod{\mathbf{L}^\CH(\mathbf{f}), \mathbf{f}}_{\spaceV} = - \int_{\R^6\times\Sf} H\left( |v-v_*|^2 - \frac{2 E_{12}^{34}}{m_{12}} \right) B_{12}^{34}(|v-v_*|, \cos \theta) \mu_1(v) \mu_2(v_*)
\\[4mm]   \times \left[ h_3(v^\prime) + h_4(v^\prime_*) - h_1(v) - h_2(v_*)\right]^2 \dd v \dd v_* \dd \sigma,
\end{multline}
where again $h_i = f_i \mu_i^{-1}$. Thanks to \eqref{eq:Dirichlet form EL}--\eqref{eq:Dirichlet form CH}, we can finally characterize the kernel $\Ker(\mathbf{L}^\EL + \mathbf{L}^\CH)$ by finding all solutions $\mathbf{f}\in L^2(\R^3, \pmb{\mu}^{-1/2})$ of the equation
\begin{equation*}
\scalprod{\mathbf{L}^\EL(\mathbf{f}), \mathbf{f}}_{\spaceV} + \scalprod{\mathbf{L}^\CH(\mathbf{f}), \mathbf{f}}_{\spaceV} = 0.
\end{equation*}
These are given by all functions satisfying the coupled relations
\begin{align}
& h_i(v) + h_j(v_*) = h_i(v^\prime) + h_j(v_*'), \label{eq:Cauchy EL} \\[2mm]
& h_1(v) + h_2(v_*) = h_3(v^\prime) + h_4(v^\prime_*), \quad (v^\prime = v_{12}^{34},\ v^\prime_* = v_{*12}^{\ 34}) \label{eq:Cauchy CH}
\end{align}
where $1 \leq i,j \leq 4$ and the velocities are taken in the admissible set satisfying microscopic elastic and reactive conservation laws for momentum and energy. It is well-known \cite{DauJunMouZam} that the solutions to the first Cauchy equation \eqref{eq:Cauchy EL} are all functions of the form
\begin{equation*}
h_i(v) = n_i + m_i u \cdot v + m_i e \frac{|v|^2}{2},
\end{equation*} 
with $n_i$, $e\in\R$, $u\in\R^3$. Now, injecting the latter into the second equation \eqref{eq:Cauchy CH}, from the reactive microscopic conservation laws we deduce a further condition relating the coefficients $(n_i)_{1\leq i\leq 4}$ as
\begin{equation} \label{eq:manifold}
n_4 + n_3 - n_2 - n_1 = e E_{12}^{34}.
\end{equation}
Therefore, we can express the projection $\pi$ of any $\mathbf{f}\in L^2(\R^3, \pmb{\mu}^{-1/2})$ onto the kernel of $\mathbf{L} = \mathbf{L}^\EL + \mathbf{L}^\CH$ in the general form
\begin{equation*} 
[\pi(\mathbf{f})]_i (t,x,v) = \left( n_i(t,x) + m_i u(t,x) \cdot v + m_i e(t,x) \frac{|v|^2}{2} \right) \mu_i, \quad 1\leq i \leq 4,
\end{equation*}
where in this case the coordinates $(n_i)_{1\leq i\leq 4}$ and $e$ satisfy the additional relation \eqref{eq:manifold} and $\Ker (\mathbf{L})$ is a 7-dimensional space spanned by the (linearly independent, but non-orthogonal) vectors
\begin{equation} \label{eq:basis CH}
\begin{split}
\pmb{\phi}_1 = \left( \begin{array}{c} \mu_1\\[3mm] 0 \\[3mm] \mu_3 \\[3mm] 0 \end{array} \right), &\qquad \pmb{\phi}_2 = \left( \begin{array}{c} 0 \\[3mm] \mu_2 \\[3mm] 0 \\[3mm] \mu_4 \end{array} \right), \qquad \pmb{\phi}_3 = \left( \begin{array}{c} \mu_1\\[3mm] 0 \\[3mm] 0 \\[3mm] \mu_4 \end{array} \right), \\[4mm] 
\pmb{\phi}_{\ell + 3} = v_\ell \left( \begin{array}{c} m_1 \mu_1 \\[3mm] m_2 \mu_2 \\[3mm] m_3 \mu_3 \\[3mm] m_4 \mu_4 \end{array} \right), & \ \ (\ell = 1, 2, 3), \qquad \pmb{\phi}_7 = \left( \begin{array}{c} \left( m_1 \frac{|v|^2}{2} + E_1\right) \mu_1 \\[3mm] \left( m_2 \frac{|v|^2}{2} + E_2\right) \mu_2 \\[3mm] \left( m_3 \frac{|v|^2}{2} + E_3\right) \mu_3 \\[3mm] \left( m_4 \frac{|v|^2}{2} + E_4\right) \mu_4 \end{array} \right).
\end{split}
\end{equation}

\medskip
\noindent \textbf{Statement of the result and strategy of our proof.} Given the close-to-equilibrium setting introduced in the previous section and encoded by equation \eqref{eq:linearized BE}, we provide a quantitative estimate in the natural space $L^2(\R^3, \pmb{\mu}^{-1/2})$ for the spectral gap of the linearized Boltzmann operator $\mathbf{L}$ modeling elastic and inelastic (chemical) collisions inside a multicomponent gas, thus extending Theorem \ref{theorem:elastic SG} from the case of inert multi-species gases to that of reactive mixtures. In particular, we are able to prove the following result.

\begin{theorem} \label{theorem:reactive SG}
Under the assumptions {\normalfont (EL1)--(EL4)} and {\normalfont (CH1)--(CH4)}, the linearized reactive operator $\mathbf{L} = \mathbf{L}^\EL + \mathbf{L}^\CH$ possesses an explicit spectral gap, i.e. there exists an explicitly computable constant $\lambda > 0$ such that, for all $\mathbf{f}\in L^2(\R^3, \pmb{\mu}^{-1/2})$,
\begin{equation} \label{eq:reactive SG}
\big\langle \mathbf{L}(\mathbf{f}), \mathbf{f} \big\rangle_{\spaceV} \leq -\lambda \norm{\mathbf{f}-\pi(\mathbf{f})}_{\spaceVgamma}^2,
\end{equation}
for any $\gamma \in [0,1]$. In particular, the constant $\lambda$ depends only on $\gamma \in [0,1]$, the total concentration $c_{\infty}$ of the mixture, the different masses $(m_i)_{1 \leq i \leq 4}$, the impinging energy $E_{12}^{34}$, the elastic collision kernels $(B_{ij})_{1 \leq i,j \leq 4}$ and the reactive collision kernels $B_{12}^{34}$ and $B_{34}^{12}$.
\end{theorem}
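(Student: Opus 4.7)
The strategy is to leverage the explicit elastic spectral gap from Theorem \ref{theorem:elastic SG} and to show that the linearized chemical operator provides the additional coercivity missing along the one-dimensional subspace $\Ker(\mathbf{L}^\EL) \cap \Ker(\mathbf{L})^\perp$, since $\Ker(\mathbf{L})$ is precisely the codimension-one subspace of $\Ker(\mathbf{L}^\EL)$ cut out by the linearized mass-action-law constraint \eqref{eq:manifold}. Given $\mathbf{f}\in L^2(\R^3,\pmb{\mu}^{-1/2})$, I would decompose $\mathbf{f} - \pi(\mathbf{f}) = \tilde{\mathbf{g}} + \mathbf{f}_\perp$, where $\tilde{\mathbf{g}} := \pi^\EL(\mathbf{f}) - \pi(\mathbf{f})$ lies in the one-dimensional space $\Ker(\mathbf{L}^\EL)\cap\Ker(\mathbf{L})^\perp$ and $\mathbf{f}_\perp := \mathbf{f} - \pi^\EL(\mathbf{f})$ lies in $\Ker(\mathbf{L}^\EL)^\perp$. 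Self-adjointness of $\mathbf{L}$ combined with $\mathbf{L}(\pi(\mathbf{f}))=0$ reduces the Dirichlet form to $\langle \mathbf{L}(\tilde{\mathbf{g}}+\mathbf{f}_\perp),\tilde{\mathbf{g}}+\mathbf{f}_\perp\rangle_{\spaceV}$, and the elastic estimate \eqref{eq:elastic SG} directly yields $-\langle \mathbf{L}^\EL(\tilde{\mathbf{g}}+\mathbf{f}_\perp),\tilde{\mathbf{g}}+\mathbf{f}_\perp\rangle_{\spaceV} \geq \lambda_\EL \|\mathbf{f}_\perp\|_{\spaceVgamma}^2$.

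The crucial algebraic observation concerns the integrand of the chemical Dirichlet form \eqref{eq:Dirichlet form CH} evaluated on $\Ker(\mathbf{L}^\EL)$. If $\tilde{\mathbf{g}}$ corresponds to the parameters $(\tilde{n}_i,\tilde{u},\tilde{e})$ via \eqref{eq:projection EL}, then the microscopic conservation of momentum and total (kinetic plus chemical) energy \eqref{eq:conservation CH} collapses the bracket in \eqref{eq:Dirichlet form CH} to the pointwise constant
\begin{equation*}
\tilde{h}_3(v') + \tilde{h}_4(v'_*) - \tilde{h}_1(v) - \tilde{h}_2(v_*) = (\tilde{n}_3+\tilde{n}_4-\tilde{n}_1-\tilde{n}_2) - \tilde{e}\, E_{12}^{34} =: \ell(\tilde{\mathbf{g}}),
\end{equation*}
which is exactly the linear functional whose kernel defines $\Ker(\mathbf{L})$ inside $\Ker(\mathbf{L}^\EL)$ by \eqref{eq:manifold}. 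Writing $h_i^\perp = (f_\perp)_i/\mu_i$ and $R^\perp := h_3^{\perp\prime} + h_4^{\perp\prime\,*} - h_1^\perp - h_2^{\perp\,*}$, the integrand of \eqref{eq:Dirichlet form CH} applied to $\tilde{\mathbf{g}}+\mathbf{f}_\perp$ becomes $[\ell(\tilde{\mathbf{g}}) + R^\perp]^2$.

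Applying the reverse Young inequality $(a+b)^2 \geq \alpha a^2 - \frac{\alpha}{1-\alpha} b^2$ valid for $\alpha\in(0,1)$ then gives
\begin{equation*}
-\langle \mathbf{L}^\CH(\tilde{\mathbf{g}}+\mathbf{f}_\perp), \tilde{\mathbf{g}}+\mathbf{f}_\perp\rangle_{\spaceV} \geq \alpha\kappa_0\,\ell(\tilde{\mathbf{g}})^2 - \frac{\alpha}{1-\alpha}\int H B_{12}^{34}\mu_1\mu_2^*(R^\perp)^2\,\dd v\,\dd v_*\,\dd\sigma,
\end{equation*}
with the explicit positive constant $\kappa_0 = \int H B_{12}^{34}\mu_1\mu_2^*\,\dd v\,\dd v_*\,\dd\sigma$. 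Expanding $(R^\perp)^2 \leq 4\sum_i (h_i^\perp)^2$ (evaluated at the appropriate variable), each of the four resulting integrals is controlled by $\|\mathbf{f}_\perp\|_{\spaceVgamma}^2$: for the pre-collisional pieces this follows from the hard-potential estimate $\int \Phi_{12}^{34}(|v-v_*|)\mu_2(v_*)\,\dd v_*\lesssim \langle v\rangle^\gamma$, a consequence of assumption (CH3) together with the energy threshold, while the two post-collisional pieces $h_3^{\perp\prime}$ and $h_4^{\perp\prime\,*}$ are reduced to the same form through the change of variables $(v,v_*)\mapsto(v',v'_*)$ combined with the micro-reversibility \eqref{eq:micro-reversibility} and the reactive equilibrium identity $(m_1 m_2/m_3 m_4)^3 \mu_3' \mu_4^{'*} = \mu_1 \mu_2^*$. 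Choosing $\alpha$ so small that $\alpha C/(1-\alpha) \leq \lambda_\EL/2$, and summing with the elastic lower bound, produces $-\langle \mathbf{L}(\mathbf{f}), \mathbf{f}\rangle_{\spaceV} \geq \alpha\kappa_0\,\ell(\tilde{\mathbf{g}})^2 + \tfrac{1}{2}\lambda_\EL \|\mathbf{f}_\perp\|_{\spaceVgamma}^2$.

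To close the argument, I would use the equivalence $\ell(\tilde{\mathbf{g}})^2 \simeq \|\tilde{\mathbf{g}}\|_{\spaceVgamma}^2$ on the one-dimensional space $\Ker(\mathbf{L}^\EL)\cap\Ker(\mathbf{L})^\perp$ (where all Gaussian-weighted norms are equivalent, with constants explicitly computable from the Gram matrix of the basis \eqref{eq:basis CH}), together with the triangle inequality $\|\mathbf{f}-\pi(\mathbf{f})\|_{\spaceVgamma}^2 \leq 2\|\tilde{\mathbf{g}}\|_{\spaceVgamma}^2 + 2\|\mathbf{f}_\perp\|_{\spaceVgamma}^2$, to extract an explicit $\lambda>0$ satisfying \eqref{eq:reactive SG}. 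The main obstacle I anticipate is the explicit bookkeeping at two points: quantifying the constant in $\ell(\tilde{\mathbf{g}})^2 \simeq \|\tilde{\mathbf{g}}\|^2$ in terms of the masses, concentrations and $E_{12}^{34}$ by inverting the Gram matrix of the basis \eqref{eq:basis CH} restricted to the one-dimensional complement of $\Ker(\mathbf{L})$, and tracking the constants through the reactive post-collisional change of variables needed for the $(R^\perp)^2$ integral without losing explicit dependence on the reactive kernels $B_{12}^{34}$ and $B_{34}^{12}$. Conceptually, however, the core of the proof is the identification of $\ell$ as the universal obstruction to chemical equilibrium on $\Ker(\mathbf{L}^\EL)$.
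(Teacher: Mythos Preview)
Your proposal is correct and follows essentially the same route as the paper's proof: both exploit the elastic spectral gap on $\mathbf{f}_\perp = \mathbf{f} - \pi^\EL(\mathbf{f})$, identify that the chemical Dirichlet form restricted to $\Ker(\mathbf{L}^\EL)$ collapses to the squared linearized mass-action defect $\ell(\tilde{\mathbf{g}})^2 = \mathcal{E}(\pi^\EL(\mathbf{f}))$, split the chemical integrand via a Young-type inequality (the paper uses the fixed choice $\alpha = 1/2$ together with an extra parameter $\eta\in(0,1]$ multiplying $\mathbf{L}^\CH$), and control the remainder $(R^\perp)^2$ term through the reactive change of variables and micro-reversibility. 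The paper carries out explicitly the two bookkeeping steps you flag as obstacles---the chemical collision-frequency bounds (its Lemma on $\nu_i^\CH \simeq \langle v\rangle^\gamma$) and a full Gram--Schmidt orthonormalization of the basis \eqref{eq:basis CH} yielding the exact constant in $\|\tilde{\mathbf{g}}\|_{\spaceV}^2 = C\,\ell(\tilde{\mathbf{g}})^2$---confirming that your outline goes through with fully explicit constants.
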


A possible way to prove inequality \eqref{eq:reactive SG} is to consider a standard splitting \cite{CerIllPul,Vil} of linearized Boltzmann-like operators with cutoff collision kernels into the sum of two operators $\mathbf{K} - \bm{\nu}$, one so-called collision frequency $\bm{\nu}$ that acts as a multiplication on $\mathbf{f}\in L^2(\R^3, \pmb{\mu}^{-1/2})$ and is coercive in this space, and a remainder $\mathbf{K} = \mathbf{L} + \bm{\nu}$ that can be proved to be compact in $L^2(\R^3, \pmb{\mu}^{-1/2})$. The main idea behind this strategy is thus to study the essential spectrum of $\bm{\nu}$, which is easy to determine, and then exploit Weyl's theorem on compact perturbations \cite{Kat} to ensure that its spectral properties are inherited by the linearized Boltzmann operator, in particular its coercivity. Compactness of the operator $\mathbf{K}$ has been investigated in a variety of settings, ranging from single-species gases \cite{Gra1,Uka}, to non-reactive mixtures composed of monatomic \cite{BouGrePavSal,DauJunMouZam} or polyatomic species \cite{Ber1,Ber2,Ber3,Ber4,BorBouSal,BruShaThi}, to multi-species gases involving chemical reactions \cite{Ber5,CarPolSoa}. Such results are particularly relevant to identify the kernel-like representation \cite{Car} of the linearized operator. However, while they guarantee the existence of a spectral gap, they yield no information on its value, and quantifying $\lambda$ in terms of the physical parameters of the problem is crucial to determine explicit rates of convergence to equilibrium \cite{BriDau, DauJunMouZam, Mou2, MouNeu}. Our result provides this missing information.

\smallskip
We prove Theorem \ref{theorem:reactive SG} by adapting the arguments from \cite{BriDau, DauJunMouZam} on the linearized Boltzmann operator for inert mixtures, where it is exploited the fact that $\mathbf{L}^\EL$ can be split into the sum $\mathbf{L}^\EL_\mono + \mathbf{L}^\EL_\cross$ of two operators (respectively modeling elastic collisions between the same species or cross interactions between different species) such that $\mathbf{L}^\EL_\mono$ has an explicit spectral gap \cite{BarMou, Mou1} and $\mathbf{L}^\EL_\cross$ is nonpositive. The reactive linearized Boltzmann operator $\mathbf{L}$ shares a similar structure: when computing its Dirichlet form, one has that $\scalprod{\mathbf{L}^\EL(\mathbf{f}),\mathbf{f}}_{\spaceV}$ satisfies inequality \eqref{eq:elastic SG} with an explicit $\lambda_\EL$ and we have seen that $\scalprod{\mathbf{L}^\CH(\mathbf{f}),\mathbf{f}}_{\spaceV} \leq 0$. This negativity can be quantified in terms of an energy functional $\mathcal{E}(\mathbf{f})$, when looking at distributions that belong to $\Ker(\mathbf{L}^\EL)$. The latter are local equilibria for the linearized elastic operator that take the form \eqref{eq:projection EL} and are characterized by a different coordinate $n_i(t,x)$ for each species. Then, the functional $\mathcal{E}(\mathbf{f})$ controls the relaxation of these collisional equilibria toward distributions of $\Ker (\mathbf{L}^\EL + \mathbf{L}^\CH)$, which are chemical local equilibria whose $n_i(t,x)$ are now linked together with $e(t,x)$ by the additional relation \eqref{eq:manifold} imposed by the mass action law \eqref{eq:MAL 2}. In particular, one can prove that the Dirichlet form of $\mathbf{L}^\CH$ splits into the sum of a positive (but small) term depending on $\norm{\mathbf{f}-\pi^\EL(\mathbf{f})}_{\spaceVgamma}^2$ which can be controlled using $\lambda_\EL$ and a nonpositive remainder depending on $\norm{\pi^\EL(\mathbf{f})}_{\spaceV}^2$ that can be bounded from above by $-\mathcal{E}(\pi^\EL(\mathbf{f}))$. The latter is then estimated in terms of the orthogonal projection of $\mathbf{f}$ onto $\Ker (\mathbf{L}^\EL + \mathbf{L}^\CH)^\perp$ to show that $-\mathcal{E}(\mathbf{f}) \approx - \norm{\mathbf{f} - \pi(\mathbf{f})}_{\spaceVgamma}^2$, allowing to derive inequality \eqref{eq:reactive SG} with explicit bounds on the spectral gap $\lambda$.

\begin{remark} \label{remark3}
    Hypotheses (EL3) and (CH3) on the kinetic part of the elastic $B_{ij}$ and chemical $B_{ij}^{hk}$ collision kernels ensure (see Lemma \ref{lemma:bounds on nu}) that both spaces
    \begin{equation*}
        \begin{split}
            L_v^2\left( \bm{\nu}^\EL \bm{\mu}^{-\frac{1}{2}} \right) &= \left\{ \mathbf{f}: \R^3\to\R^4\ \textrm{measurable s.t. } \sum_{i=1}^4 \int_{\R^3} f_i^2(v) \nu_i^\EL(v) \mu_i^{-1/2}\dd v < +\infty \right\}, \\[4mm]
            L_v^2\left( \bm{\nu}^\CH \bm{\mu}^{-\frac{1}{2}} \right) &= \left\{ \mathbf{f}: \R^3\to\R^4\ \textrm{measurable s.t. } \sum_{i=1}^4 \int_{\R^3} f_i^2(v) \nu_i^\CH(v) \mu_i^{-1/2}\dd v < +\infty \right\},
        \end{split}
    \end{equation*}
    are equivalent to $\spaceVgamma$, where the elastic $\bm{\nu}^\EL$ and chemical $\bm{\nu}^\CH$ collision frequencies are defined respectively in \eqref{eq:nu EL} and \eqref{eq:nu CH}. Replacing these conditions with (EL3')--(EL5) and (CH3')--(CH5)--(CH6) respectively, in order to consider more general collision kernels (see the discussion in Remarks \ref{remark1} and \ref{remark2}), would prevent this equivalence between spaces. Therefore, in this case the spectral gap estimate \eqref{eq:elastic SG} for the linearized elastic operator $\mathbf{L}^\EL$, given by Theorem \ref{theorem:elastic SG}, would read
    \begin{equation*}
        \big\langle \mathbf{L}^\EL(\mathbf{f}), \mathbf{f} \big\rangle_{\spaceV} \leq -\lambda_\EL \norm{\mathbf{f}-\pi^\EL(\mathbf{f})}_{L_v^2\left(\bm{\nu}^\EL \bm{\mu}^{-\frac{1}{2}}\right)}^2, \quad \forall \mathbf{f}\in \spaceV,
    \end{equation*}
    while the spectral gap estimate \eqref{eq:reactive SG} for the full linearized reactive operator $\mathbf{L}$, given by Theorem \ref{theorem:reactive SG}, should be replaced with
    \begin{equation*}
        \big\langle \mathbf{L}(\mathbf{f}), \mathbf{f} \big\rangle_{\spaceV} \leq -\lambda \norm{\mathbf{f}-\pi(\mathbf{f})}_{L_v^2\left((\bm{\nu}^\EL + \bm{\nu}^\CH) \bm{\mu}^{-\frac{1}{2}}\right)}^2, \quad \forall \mathbf{f}\in \spaceV.
    \end{equation*}
\end{remark}

	
\section{Proof of Theorem \ref{theorem:reactive SG}}

\noindent \textbf{Preliminaries on the chemical collision frequencies.} Our first aim is to prove that the chemical collision frequency $\bm{\nu}^\CH$, acting as a multiplicative operator on $\mathbf{f} \in L^2(\R^3, \pmb{\mu}^{-1/2})$ and defined componentwise by
\begin{equation} \label{eq:nu CH}
\nu_i^\CH(v) = \int_{\R^3 \times \Sf} H\left( |v-v_*|^2 - \frac{2 E_{ij}^{hk}}{m_{ij}} \right) B_{ij}^{hk}(|v-v_*|, \cos \theta) \mu_j(v_*) \dd v_* \dd \sigma, \quad v \in \R^3,
\end{equation}
for any reaction $\mathcal{S}_i + \mathcal{S}_j \rightarrow \mathcal{S}_h + \mathcal{S}_k$, can be bounded above and below by the function $\scalprod{v}^\gamma$. In fact, since the latter appears in the weighted $L^2$ norm from the spectral gap estimate \eqref{eq:elastic SG} satisfied by the elastic operator, it is clear that in order to compare the Dirichlet form of $\mathbf{L}^\CH$ with that of $\mathbf{L}^\EL$, one needs to determine suitable relative bounds between the weighted norms bounding these two operators (or rather, their respective collision frequencies). In particular, we obtain the following equivalence result between $\nu_i^\CH(v)$ and $\scalprod{v}^\gamma$.

\begin{lemma} \label{lemma:bounds on nu}
Given the chemical collision frequency $\bm{\nu}^\CH$ defined by \eqref{eq:nu CH}, for any $1 \leq i \leq 4$ there exist explicitly computable constants $\underline{\nu}_i$, $\overline{\nu}_i > 0$ such that
\begin{equation} \label{eq:bounds on nu}
\underline{\nu}_i \scalprod{v}^\gamma \leq \nu_i^\CH(v) \leq \overline{\nu}_i \scalprod{v}^\gamma, \quad \forall v \in \R^3.
\end{equation}
In particular, the constants $\underline{\nu}_i$ and $\overline{\nu}_i$ depend solely on $\gamma \in [0,1]$, the total concentration $c_\infty$ of the mixture, the different masses $(m_i)_{1 \leq i \leq 4}$, the impinging energy $E_{12}^{34}$ and the reactive collision kernels $B_{12}^{34}$ and $B_{34}^{12}$.
\end{lemma}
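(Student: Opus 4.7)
The plan is first to separate the angular integration from the kinetic one. Under (CH4), the quantity $\beta_{ij}^{hk} := \int_{\Sf} b_{ij}^{hk}(\cos\theta) \dd \sigma$ is a finite positive constant, so
\[
\nu_i^\CH(v) = \beta_{ij}^{hk} \int_{\R^3} H\!\left( |v-v_*|^2 - \tfrac{2 E_{ij}^{hk}}{m_{ij}} \right) \Phi_{ij}^{hk}(|v-v_*|) \mu_j(v_*) \dd v_*,
\]
with $\Phi_{ij}^{hk}$ given by (CH3). The key algebraic observation is that the exponents cancel as $(\gamma+1)/2 - (1-\gamma)/2 = \gamma$, so one expects $\Phi_{ij}^{hk}(g)\sim g^\gamma$ for large $g$. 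The whole argument then reduces, after pointwise bounds on $\Phi_{ij}^{hk}$, to the classical estimate $\int_{\R^3} |v-v_*|^\gamma \mu_j(v_*) \dd v_* \sim \langle v\rangle^\gamma$ for $\gamma \in [0,1]$, whose constants are explicit in $c_{j,\infty}$, $m_j$, and $\gamma$.

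For the upper bound I would distinguish two cases according to the sign of $E_{ij}^{hk}$. In the endothermic direction ($i\in\{1,2\}$, $E_{ij}^{hk}\geq 0$), the Heaviside restricts the integration to $g := |v-v_*|\geq g_0 := \sqrt{2 E_{ij}^{hk}/m_{ij}}$, and on this set $g^2 - g_0^2 \leq g^2$ yields $\Phi_{ij}^{hk}(g) \leq C g^\gamma$ directly. In the exothermic direction ($i\in\{3,4\}$, $E_{ij}^{hk}\leq 0$) the Heaviside is trivial but the numerator becomes $(g^2 + a)^{(\gamma+1)/4}$ with $a = 2|E_{ij}^{hk}|/m_{ij}>0$. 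Using the subadditivity $(g^2+a)^{(\gamma+1)/4} \leq g^{(\gamma+1)/2} + a^{(\gamma+1)/4}$ splits $\Phi_{ij}^{hk}(g)$ into a hard-potential piece bounded by $C g^\gamma$ and a piece bounded by $C g^{(\gamma-1)/2}$ with integrable singularity at $g=0$. After integration against $\mu_j$, the first contributes $C\langle v\rangle^\gamma$, while the second is uniformly bounded (and exponentially small for large $|v|$), hence dominated by $\langle v\rangle^\gamma$ since $\langle v\rangle\geq 1$.

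For the lower bound, the exothermic case is immediate from $(g^2+a)^{(\gamma+1)/4} \geq g^{(\gamma+1)/2}$, giving $\Phi_{ij}^{hk}(g)\geq C g^\gamma$ and the claim via the classical Maxwellian convolution estimate. The main obstacle is the endothermic case: on top of a restricted integration domain, the factor $(g^2 - g_0^2)^{(\gamma+1)/4}$ vanishes at the threshold $g = g_0$, preventing a pointwise lower bound of the form $\Phi_{ij}^{hk}(g) \geq C g^\gamma$ on the whole admissible set. I would circumvent this by restricting further to $\{v_* : |v-v_*|\geq 2 g_0\}$, where $g^2 - g_0^2 \geq 3 g^2/4$ and the desired pointwise bound is recovered. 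It then remains to show
\[
I(v) := \int_{|v-v_*|\geq 2 g_0} |v-v_*|^\gamma \mu_j(v_*) \dd v_* \geq c \langle v\rangle^\gamma,
\]
which I would obtain by noting that $I$ is continuous and strictly positive on $\R^3$ and by exhibiting its explicit asymptotic behaviour $I(v)\sim \langle v\rangle^\gamma$ as $|v|\to\infty$ (since for $|v|$ large the excluded ball has negligible Gaussian mass), so that $I(v)/\langle v\rangle^\gamma$ is continuous, positive, and bounded below by a positive constant at infinity. Tracking the constants at each step then produces the quantitative $\underline{\nu}_i$ and $\overline{\nu}_i$ depending only on the parameters listed in the statement.
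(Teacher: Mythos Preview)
Your overall architecture matches the paper's proof closely: separate the angular part, reduce the kinetic factor to pointwise comparisons with $g^\gamma$ via the exponent identity $(\gamma+1)/2-(1-\gamma)/2=\gamma$, and then appeal to the standard equivalence $\int_{\R^3}|v-v_*|^\gamma\mu_j(v_*)\,\dd v_*\sim\langle v\rangle^\gamma$. The upper bounds (both directions) and the exothermic lower bound are handled exactly as in the paper, including the subadditivity split and the $L^1+L^\infty$ treatment of $g^{(\gamma-1)/2}$.

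The one genuine gap is in your endothermic lower bound. Restricting to $\{|v-v_*|\geq 2g_0\}$ and then arguing that $I(v)/\langle v\rangle^\gamma$ is ``continuous, positive, and bounded below at infinity'' is a compactness argument: it guarantees the existence of a positive infimum, but it does not produce an \emph{explicitly computable} $\underline{\nu}_i$, which is precisely what the lemma asserts. Your closing sentence ``tracking the constants at each step'' does not apply to this step, because no constant has been exhibited.

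The paper resolves this by an explicit two-regime split on $|v|$: setting $\bar C:=2\big(1+\tfrac{2E_{12}^{34}}{m_{12}}\big)$, for $|v|^2\geq \bar C$ one uses $|v-v_*|^2\geq\tfrac12|v|^2-|v_*|^2$ and restricts to $\{|v_*|^2\leq\tfrac12\}$ to get an explicit bound proportional to $|v|^\gamma$; for $|v|^2\leq \bar C$ one instead restricts to $\{|v_*|^2\geq 3\bar C\}$ and obtains an explicit positive constant, which dominates $1+|v|^\gamma$ on this bounded set. Both constants are written in closed form via incomplete Gamma functions. Your restriction $\{|v-v_*|\geq 2g_0\}$ is a perfectly good starting point, but to meet the ``explicitly computable'' requirement you must replace the compactness step by a quantitative argument of this type.
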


\begin{proof}
It is well known \cite{CerIllPul, Vil} that $\scalprod{v}^\gamma$ is equivalent to $1 + |v|^\gamma$. Therefore, it is enough to prove the following upper and lower bounds:
\begin{equation*}
\underline{\nu}_i (1 + |v|^\gamma) \leq \nu_i^\CH(v) \leq \overline{\nu}_i (1 + |v|^\gamma), \quad \forall v \in \R^3.
\end{equation*}
Let us reduce our study to the first and third species, $\mathcal{S}_1$ and $\mathcal{S}_3$, since the others can be treated in a similar way. In particular, we determine as example the explicit form of $\overline{\nu}_1$ and $\underline{\nu}_1$. For simplicity and convenience, across all estimates we shall also successively redefine the positive constants $\underline{\nu}_i$ and $\overline{\nu}_i$. Recall the elementary inequalities $(a-b)^2 \geq \frac{1}{2} a^2 - b^2$ and $(a+b)^\alpha \leq a^\alpha + b^\alpha$ for any $\alpha\in [0,1]$, which will be used with $\alpha = \gamma$ for the upper bound and $\alpha=\frac{1-\gamma}{2}$ for the lower bound. For $\mathcal{S}_1$, we can estimate $\nu_1^\CH$ from above as
\begin{equation*}
\begin{split}
\nu_1^\CH(v) & = \int_{\R^3\times\Sf} H\left( |v-v_*|^2 - \frac{2 E_{12}^{34}}{m_{12}} \right) B_{12}^{34}(|v-v_*|, \cos \theta) \mu_2(v_*) \dd v_* \dd \sigma \\[6mm]
& = C_{12}^{34} \int_{\Sf} b_{12}^{34} (\cos \theta) \dd \sigma \int_{\R^3} H\bigg( |v-v_*|^2 - \frac{2 E_{12}^{34}}{m_{12}} \bigg) \\[3mm]
& \hspace*{5cm} \times \left[ \frac{m_{12}}{m_{34}} \left( |v-v_*|^2 - \frac{2 E_{12}^{34}}{m_{12}} \right) \right]^{\frac{\gamma+1}{4}} |v-v_*|^{\frac{\gamma-1}{2}} \mu_2(v_*) \dd v_* \\[4mm]
& \leq \overline{\nu}_1 \int_{\R^3} |v-v_*|^{\frac{\gamma+1}{2}} |v-v_*|^{\frac{\gamma-1}{2}} \mu_2(v_*) \dd v_* \\[4mm]
& \leq \overline{\nu}_1 \int_{\R^3} (|v|^\gamma + |v_*|^\gamma) \mu_2(v_*) \dd v_* \\[6mm]
& \leq \overline{\nu}_1 (1 + |v|^\gamma),
\end{split}
\end{equation*}
by setting $\overline{\nu}_1$ to the value
\begin{equation*}
\overline{\nu}_1 = c_\infty C_{12}^{34} \left( \frac{m_{12}}{m_{34}} \right)^{\frac{\gamma+1}{4}} \max \left\{ 1,\ \frac{4}{\sqrt{\pi}} \left( \frac{2}{m_2} \right)^{\frac{\gamma}{2}} \overline{\Gamma}(\gamma+3,0) \right\} \int_{\Sf} b_{12}^{34} (\cos \theta) \dd \sigma,
\end{equation*}
with $\overline{\Gamma}(s,y) = \int_y^{+\infty} r^{s-1} e^{-r} \dd r$ denoting the upper incomplete Gamma function, and the constant is positive and bounded thanks to our assumption (CH4) on the angular part of the reactive collision kernel $B_{12}^{34}$.

\smallskip
For the lower bound the estimates should be more careful. We distinguish two cases. For $|v|^2 \geq 2\left( 1 + \frac{2 E_{12}^{34}}{m_{12}} \right) =: \bar{C}$, we successively get
\begin{equation*}
\begin{split}
\nu_1^\CH(v) & \geq \underline{\nu}_1 \int_{\R^3} H\left( |v-v_*|^2 - \frac{2 E_{12}^{34}}{m_{12}} \right) \frac{\left( |v-v_*|^2 - \frac{2 E_{12}^{34}}{m_{12}} \right)^{\frac{\gamma+1}{4}}}{|v-v_*|^{\frac{1-\gamma}{2}}}\mu_2(v_*) \dd v_* \\[4mm]
& \geq \underline{\nu}_1 \int_{\R^3} H\left( \frac{1}{2} |v|^2 - |v_*|^2 - \frac{2 E_{12}^{34}}{m_{12}} \right) \frac{\left( \frac{1}{2} |v|^2 - |v_*|^2 - \frac{2 E_{12}^{34}}{m_{12}} \right)^{\frac{\gamma+1}{4}}}{|v|^{\frac{1-\gamma}{2}} + |v_*|^{\frac{1-\gamma}{2}}} \mu_2(v_*) \dd v_* \\[4mm]
& \geq \underline{\nu}_1 \int_{|v_*|^2 \leq \frac{1}{2}} H\left( \frac{1}{4 \bar{C}} |v|^2 \right) \frac{\left( \frac{1}{4 \bar{C}} |v|^2 \right)^{\frac{\gamma+1}{4}}}{2 |v|^{\frac{1-\gamma}{2}}} \mu_2(v_*) \dd v_* \\[6mm]
& \geq \underline{\nu}_1 |v|^\gamma \geq \ \underline{\nu}_1 (1 + |v|^\gamma),
\end{split}
\end{equation*}
where we find
\begin{equation*}
\underline{\nu}_1 = \frac{c_\infty}{2 \sqrt{\pi}} C_{12}^{34} \left( \frac{1}{4 \bar{C}} \frac{m_{12}}{m_{34}} \right)^{\frac{\gamma+1}{4}} \underline{\Gamma}\left( \frac{3}{2}, \frac{m_2}{4} \right) \int_{\Sf} b_{12}^{34} (\cos \theta) \dd \sigma,
\end{equation*}
with $\underline{\Gamma}(s,y) = \int_0^y r^{s-1} e^{-r} \dd r$ denoting the lower incomplete Gamma function, and $\underline{\nu}_1$ is positive thanks again to hypothesis (CH4).

\smallskip
For $|v|^2 \leq 2\left( 1 + \frac{2 E_{12}^{34}}{m_{12}} \right)$, we instead obtain
\begin{equation*}
\begin{split}
\nu_1^\CH(v) & \geq \underline{\nu}_1 \int_{\R^3} H\left( |v-v_*|^2 - \frac{2 E_{12}^{34}}{m_{12}} \right)  \frac{\left( |v-v_*|^2 - \frac{2 E_{12}^{34}}{m_{12}} \right)^{\frac{\gamma+1}{4}}}{|v-v_*|^{\frac{1-\gamma}{2}}} \mu_2(v_*) \dd v_* \\[2mm]
& \geq \underline{\nu}_1 \int_{\R^3} H\left( \frac{1}{2} |v_*|^2 - |v|^2 - \frac{2 E_{12}^{34}}{m_{12}} \right) \frac{\left( \frac{1}{2} |v_*|^2 - |v|^2 - \frac{2 E_{12}^{34}}{m_{12}} \right)^{\frac{\gamma+1}{4}}}{|v|^{\frac{1-\gamma}{2}} + |v_*|^{\frac{1-\gamma}{2}}} \mu_2(v_*) \dd v_* \\[2mm]
& \geq \underline{\nu}_1 \int_{|v_*|^2 \geq 3 \bar{C}} H\left( \frac{1}{2} |v_*|^2 - 2 - \frac{6 E_{12}^{34}}{m_{12}} \right) \frac{\left( \frac{1}{2} |v_*|^2 - 2 - \frac{6 E_{12}^{34}}{m_{12}} \right)^{\frac{\gamma+1}{4}}}{2 |v_*|^{\frac{1-\gamma}{2}}} \mu_2(v_*) \dd v_* \\[6mm]
& \geq \underline{\nu}_1 \geq \ \underline{\nu}_1 (1 + |v|^\gamma),
\end{split}
\end{equation*}
where now we compute
\begin{equation*}
\underline{\nu}_1 = \frac{c_\infty}{\sqrt{\pi} \left( 1 + \bar{C}^{\frac{\gamma}{2}} \right)} C_{12}^{34} \left( \frac{m_{12}}{m_{34}} \right)^{\frac{\gamma+1}{4}} \left( \frac{2}{m_2} \right)^{\frac{\gamma-1}{4}} \overline{\Gamma}\left( \frac{\gamma + 5}{4}, \frac{3}{2} m_2 \bar{C} \right) \int_{\Sf} b_{12}^{34} (\cos \theta) \dd \sigma.
\end{equation*}
To conclude the estimate, one can simply take $\underline{\nu}_1$ to be the minimum between this constant and the one obtained from the previous lower bound. 

\smallskip
Turning next to $\mathcal{S}_3$, we start with the lower bound which is easier to recover in this case. We get
\begin{equation*}
\begin{split}
\nu_3^\CH(v) & =  \int_{\R^3\times\Sf} B_{34}^{12}(|v-v_*|, \cos \theta) \mu_4(v_*) \dd v_* \dd \sigma \\[4mm]
& =  C_{34}^{12} \int_{\Sf} b_{34}^{12} (\cos \theta) \dd \sigma \int_{\R^3} \left[ \frac{m_{34}}{m_{12}} \left( |v-v_*|^2 + \frac{2 E_{12}^{34}}{m_{34}} \right) \right]^{\frac{\gamma+1}{4}} |v-v_*|^{\frac{\gamma-1}{2}} \mu_4(v_*) \dd v_* \\[4mm]
& \geq \underline{\nu}_3 \int_{\R^3} |v-v_*|^{\frac{\gamma+1}{2}} |v-v_*|^{\frac{\gamma-1}{2}} \mu_4(v_*) \dd v_* \\[6mm]
& \geq \underline{\nu}_3 (1 + |v|^\gamma).
\end{split}
\end{equation*}
Al last, for its upper bound we successively compute
\begin{equation*}
\begin{split}
\nu_3^\CH(v) & \leq \overline{\nu}_3 \int_{\R^3} \frac{\left( |v-v_*|^2 + \frac{2 E_{12}^{34}}{m_{34}} \right)^{\frac{\gamma+1}{4}}}{|v-v_*|^{\frac{1-\gamma}{2}}}\mu_4(v_*) \dd v_* \\[4mm]
& \leq \overline{\nu}_3 \int_{\R^3} \frac{|v-v_*|^{\frac{\gamma+1}{2}} + 1}{|v-v_*|^{\frac{1-\gamma}{2}}}\mu_4(v_*) \dd v_* \\[4mm]
& \leq \overline{\nu}_3 \left[ (1+|v|^\gamma) + \int_{\R^3} |v-v_*|^{\frac{\gamma-1}{2}} \exp\left( -\frac{m_4}{2} |v_*|^2 \right) \dd v_* \right].
\end{split}
\end{equation*}
It only remains to prove that the second term can be bounded by the first one. We notice that the singular part $|v|^{\frac{\gamma-1}{2}}$ can be decomposed into two parts as $|v|^{\frac{\gamma-1}{2}} \chi_{\{|v| < 1\}} + |v|^{\frac{\gamma-1}{2}}\chi_{\{|v| \geq 1\}}$  that belong respectively to $L^1_v(\R^3)$ and $L^\infty_v(\R^3)$. Therefore, convolution with the Maxwellian ensures that the second term actually belongs to $L^\infty_v(\R^3)$ and can thus be bounded by $1+|v|^\gamma$.
\end{proof}


\medskip
To complete the proof of Theorem \ref{theorem:reactive SG}, we proceed via several steps. For convenience of the reader, let us sketch the strategy. In \textbf{Step 1} we split $\mathbf{L}$ into elastic and chemical parts to initially recover
\begin{equation*}
    \scalprod{\mathbf{L}(\mathbf{f}), \mathbf{f}}_{\spaceV}  \le \scalprod{\mathbf{L}^{\EL}(\mathbf{f}), \mathbf{f}}_{\spaceV} + \eta \scalprod{\mathbf{L}^{\CH}(\mathbf{f}), \mathbf{f}}_{\spaceV},
\end{equation*}
for any $\eta\in (0,1]$. From Theorem \ref{theorem:elastic SG}, the first part is controlled  by $-\lambda_{\EL}\norm{\mathbf{f} - \pi^{\EL}(\mathbf{f})}_{\spaceVgamma}^2$. To estimate the second part, in \textbf{Step 2}, we split $\mathbf{f} = \pi^{\EL}(\mathbf{f}) + \big(\mathbf{f} - \pi^{\EL}(\mathbf{f})\big)$ and show, by using the precise formula of $\mathbf{L}^{\CH}$, that the orthogonal part $\mathbf{f} - \pi^{\EL}(\mathbf{f})$ contained in the Dirichlet form $\scalprod{\mathbf{L}^{\CH}(\mathbf{f}), \mathbf{f}}_{\spaceV}$ can be bounded by $\norm{\mathbf{f} - \pi^{\EL}(\mathbf{f})}_{\spaceVgamma}^2$. For the remaining term that appears in $\scalprod{\mathbf{L}^{\CH}(\mathbf{f}), \mathbf{f}}_{\spaceV}$ and depends on $\pi^{\EL}(\mathbf{f})$, we show in \textbf{Step 3} that it is controlled by the positive functional $\mathcal{E}(\mathbf{g}):= \left(n_4 + n_3 - n_2 - n_1 - eE_{12}^{34}\right)^2$ for any $\mathbf{g} \in \Ker(\mathbf{L}^{\EL})\cap \text{Dom}(\mathbf{L}^{\CH})$, where $(n_i)_{1 \leq i \leq 4}$ and $e$ denote the coordinates corresponding to species densities and energy of $\mathbf{g}$ in the orthonormal basis $(\pmb{\phi}_k^{\EL})$ given in \eqref{eq:orthonormal basis EL} and \eqref{eq:projection EL}. In order to conclude, we are left with determining a relation between the three quantities $\norm{\mathbf{f} - \pi(\mathbf{f})}_{\spaceVgamma}^2$, $\norm{\mathbf{f} - \pi^{\EL}(\mathbf{f})}_{\spaceVgamma}^2$ and $\mathcal{E}(\pi^{\EL}(\mathbf{f}))$. This is done in \textbf{Step 4}, where we prove that for some explicit constant $C > 0$ it holds
\begin{equation*}
    -\norm{\mathbf{f} - \pi^{\EL}(\mathbf{f})}_{\spaceVgamma}^2 \le -\frac{1}{2} \norm{\mathbf{f} - \pi(\mathbf{f})}_{\spaceVgamma}^2 + C \mathcal{E}(\pi^{\EL}(\mathbf{f})),
\end{equation*}
by using an explicit orthonormal basis $\big(\pmb{\phi}_k^{\CH}\big)_{1\le k \le 8}$ of $\Ker(\mathbf{L}^{\EL} + \mathbf{L}^{\CH})$, given by \eqref{eq:orthonormal basis CH}, and an arbitrary orthonormal basis $\big(\pmb{\psi}_k^{\EL}\big)_{1\le k\le 8}$ of $\Ker(\mathbf{L}^{\EL})$. Combining all the previous estimates in \textbf{Step 5}, we end up with the desired spectral gap estimate \eqref{eq:reactive SG} of Theorem \ref{theorem:reactive SG}.

\medskip
\noindent \textbf{Step 1 -- Splitting into elastic and chemical components.} We continue by adapting the strategy developed in \cite{DauJunMouZam, BriDau} to prove the estimate \eqref{eq:elastic SG} for $\mathbf{L}^\EL$, which was based on the essential fact that the multi-species elastic operator can be split into a single-species operator (i.e. acting on distributions from the same species) and a component involving all other cross interactions. Similarly, the idea here is to use the definition of $\mathbf{L}$ as the sum $\mathbf{L}^\EL + \mathbf{L}^\CH$ and exploit that $\mathbf{L}^\EL$ has an explicit spectral gap. In particular, letting $\mathbf{f}\in \Dom(\mathbf{L}^\EL)$ and recalling Theorem \ref{theorem:elastic SG}, we initially deduce that
\begin{equation*}
\begin{split}
\scalprod{\mathbf{L}(\mathbf{f}), \mathbf{f}}_{\spaceV} & = \scalprod{\mathbf{L}^\EL(\mathbf{f}), \mathbf{f}}_{\spaceV} + \scalprod{\mathbf{L}^\CH(\mathbf{f}), \mathbf{f}}_{\spaceV}\\[4mm]
& \leq -\lambda_\EL \norm{\mathbf{f} - \pi^\EL(\mathbf{f})}^2_{\spaceVgamma} + \eta \scalprod{\mathbf{L}^\CH(\mathbf{f}), \mathbf{f}}_{\spaceV},
\end{split}
\end{equation*}
for any $\eta \in (0,1]$, since from \eqref{eq:Dirichlet form CH} follows that $(1-\eta) \scalprod{\mathbf{L}^\CH(\mathbf{f}), \mathbf{f}}_{\spaceV} \leq 0$. We now wish to prove that part of the second term can actually be absorbed into the first one depending on $\lambda_\EL$.

\medskip
\noindent \textbf{Step 2 -- Control of the orthogonal component.} To show this, we use the projection $\pi^\EL$ to split $\mathbf{f} = \pi^\EL(\mathbf{f}) + \big( \mathbf{f}-\pi^\EL(\mathbf{f}) \big)$ into a macroscopic and a kinetic part, this latter being given by the orthogonal component $\mathbf{f}-\pi^\EL(\mathbf{f}) \in \Ker(\mathbf{L}^\EL)^\perp$. Recalling that $h_i = f_i \mu_i^{-1}$ and denoting $\displaystyle \mathcal{A}_{12}^{34}(\mathbf{h}) := h_3(v_{12}^{34}) + h_4(v_{*12}^{\ 34}) - h_1(v) - h_2(v_*)$, we can use the straightforward inequality $\displaystyle \left[ \mathcal{A}_{12}^{34}(\mathbf{h}) \right]^2 \geq \frac{1}{2} \left[ \mathcal{A}_{12}^{34}(\pi^\EL(\mathbf{h})) \right]^2 - \left[ \mathcal{A}_{12}^{34}(\mathbf{h} - \pi^\EL(\mathbf{h})) \right]^2$ to deduce that
\begin{equation*}
\begin{split}
\scalprod{\mathbf{L}(\mathbf{f}), \mathbf{f}}_{\spaceV} & \leq - \lambda_\EL \norm{\mathbf{f} - \pi^\EL(\mathbf{f})}^2_{\spaceVgamma} \\[6mm]
& \hspace*{-2cm} - \frac{\eta}{2} \int_{\R^6\times\Sf} H\left( |v-v_*|^2 - \frac{2 E_{12}^{34}}{m_{12}} \right) B_{12}^{34}(|v-v_*|, \cos \theta) \mu_1(v) \mu_2(v_*) \left[ \mathcal{A}_{12}^{34}(\pi^\EL(\mathbf{f})) \right]^2 \dd v \dd v_* \dd \sigma \\[6mm]
& \hspace*{-2cm} + \eta \int_{\R^6\times\Sf} H\left( |v-v_*|^2 - \frac{2 E_{12}^{34}}{m_{12}} \right) B_{12}^{34}(|v-v_*|, \cos \theta) \mu_1(v) \mu_2(v_*) \left[ \mathcal{A}_{12}^{34}(\mathbf{h} - \pi^\EL(\mathbf{h})) \right]^2 \dd v \dd v_* \dd \sigma.
\end{split}
\end{equation*}
Now, the third term involving the orthogonal component can actually be estimated from above by $\norm{\mathbf{f} - \pi^\EL(\mathbf{f})}^2_{\spaceVgamma}$. Indeed, using the changes of variables $(v, v_*) \mapsto (v_*, v)$, $(v_{12}^{34}, v_{* 12}^{\ 34}) \mapsto (v_*, v)$ and the invariance properties of the collision kernels, specifically their symmetries when interchanging species $(1,2) \leftrightarrow (2,1)$, $(3,4) \leftrightarrow (4,3)$ and their micro-reversibility \eqref{eq:micro-reversibility}, we see that
\begin{equation} \label{eq:C_nu}
\begin{split}
& \int_{\R^6\times\Sf} H \left( |v-v_*|^2 - \frac{2 E_{12}^{34}}{m_{12}} \right) B_{12}^{34}(|v-v_*|, \cos \theta) \mu_1(v) \mu_2(v_*) \left[ A_{12}^{34}(\mathbf{h} - \pi^\EL(\mathbf{h})) \right]^2 \dd v \dd v_* \dd \sigma \\[4mm]
& \leq 4 \int_{\R^6\times\Sf} H\left( |v-v_*|^2 - \frac{2 E_{12}^{34}}{m_{12}} \right) B_{12}^{34}(|v-v_*|, \cos \theta) \mu_1(v) \mu_2(v_*) \\[2mm]
& \hspace{1.5cm} \times \left[ \left(h_3^\prime - \pi^\EL(h_3^\prime)\right)^2 + \left(h_4^{\prime *} - \pi^\EL(h_4^{\prime *})\right)^2 + \left(h_1 - \pi^\EL(h_1)\right)^2 + \left(h_2^* - \pi^\EL(h_2^*)\right)^2 \right] \dd v \dd v_* \dd \sigma \\[4mm]
& \leq 4 \Bigg\{ \int_{\R^6\times\Sf} B_{12}^{34}(|v-v_*|, \cos \theta) \left(f_1 - \pi^\EL(f_1)\right)^2 \mu_1(v)^{-1} \mu_2(v_*) \dd v\dd v_* \dd \sigma \ + \\[2mm]
& \hspace{1.3cm} \int_{\R^6\times\Sf} B_{12}^{34}(|v-v_*|, \cos \theta) \left(f_2 - \pi^\EL(f_2)\right)^2 \mu_2(v)^{-1} \mu_1(v_*) \dd v\dd v_* \dd \sigma \ + \\[2mm]
& \hspace{1.3cm} \int_{\R^6\times\Sf} B_{34}^{12}(|v-v_*|, \cos \theta) \left(f_3 - \pi^\EL(f_3)\right)^2 \mu_3(v)^{-1} \mu_4(v_*) \dd v\dd v_* \dd \sigma \ + \\[2mm]
& \hspace{1.3cm} \int_{\R^6\times\Sf} B_{34}^{12}(|v-v_*|, \cos \theta) \left(f_4 - \pi^\EL(f_4)\right)^2 \mu_4(v)^{-1} \mu_3(v_*) \dd v\dd v_* \dd \sigma \Bigg \} \\[4mm]
& \leq 16 C_{\nu} \norm{\mathbf{f} - \pi^\EL(\mathbf{f})}^2_{\spaceVgamma},
\end{split}
\end{equation}
where we have set $\displaystyle C_{\nu} = \max_{1 \leq i \leq 4} \overline{\nu}_i$ and each $\overline{\nu}_i$ comes from Lemma \ref{eq:bounds on nu}. Thus, clearly,
\begin{equation} \label{eq:main estimate 1}
\begin{split}
\scalprod{\mathbf{L}(\mathbf{f}), \mathbf{f}}_{\spaceV} \leq - (\lambda_{\mathsf{EL}} - 16 C_{\nu} \eta) & \norm{\mathbf{f} - \pi^\EL(\mathbf{f})}^2_{\spaceVgamma} \\[4mm] 
& \hspace*{0.5cm} + \frac{\eta}{2} \scalprod{\mathbf{L}^\CH(\pi^\EL(\mathbf{f})), \pi^\EL(\mathbf{f})}_{\spaceV}.
\end{split}
\end{equation}

\medskip
\noindent \textbf{Step 3 -- Connection with the manifold of chemical equilibria.} The goal is now to prove that for every $\mathbf{f} \in \Ker(\mathbf{L}^\EL) \cap \Dom(\mathbf{L}^\CH)$ the following estimate holds
\begin{equation*}
\scalprod{\mathbf{L}^\CH(\mathbf{f}), \mathbf{f}}_{\spaceV} \leq - C_b \mathcal{E}(\mathbf{f}),
\end{equation*}
for some constant $C_b>0$, where the functional $\mathcal{E}: \Ker(\mathbf{L}^\EL) \to \R_+$ is defined as
\begin{equation*}
\mathcal{E}(\mathbf{f}) = \left( n_4 + n_3 - n_2 - n_1 - e E_{12}^{34} \right)^2,
\end{equation*}
with $(n_i)_{1\leq i\leq 4}$ and $e$ describing the coordinates of $\mathbf{f}\in \Ker(\mathbf{L}^\EL)$ with respect to the orthonormal basis $(\pmb{\phi}^\EL_k)_{1\leq k\leq 8}$ given by \eqref{eq:orthonormal basis EL}. Recalling that in this basis the orthogonal projection $\pi^\EL(\mathbf{f})$ has the form \eqref{eq:projection EL}, we deduce that the quantity $\mathcal{A}_{12}^{34}(\pi^\EL(\mathbf{f}))$ explicitly writes as
\begin{equation*}
A_{12}^{34}(\pi^\EL(\mathbf{f}))^2 = \left( n_4 + n_3 - n_2 - n_1 - e E_{12}^{34} \right)^2,
\end{equation*}
and therefore
\begin{equation} \label{eq:C_b}
\begin{split}
& -\scalprod{\mathbf{L}^\CH(\pi^\EL(\mathbf{f})), \pi^\EL(\mathbf{f})}_{\spaceV} \\[3mm]
& = \int_{\R^6\times\Sf} H\left( |v-v_*|^2 - \frac{2 E_{12}^{34}}{m_{12}} \right) B_{12}^{34}(|v-v_*|, \cos \theta) \mu_1(v) \mu_2(v_*) A_{12}^{34}(\pi^\EL(\mathbf{f}))^2 \dd v \dd v_* \dd \sigma \\[3mm]
& = \left( n_4 + n_3 - n_2 - n_1 - e E_{12}^{34} \right)^2 \int_{\R^6\times\Sf} H\left( |v-v_*|^2 - \frac{2 E_{12}^{34}}{m_{12}} \right) B_{12}^{34}(|v-v_*|, \cos \theta) \mu_1(v) \mu_2(v_*) \dd v \dd v_* \dd \sigma \\[3mm]
& \geq \underline{\nu}_1 \left( n_4 + n_3 - n_2 - n_1 - e E_{12}^{34} \right)^2 \int_{\R^3} (1 + |v|^\gamma) \mu_1(v) \dd v = C_b \mathcal{E}(\pi^\EL(\mathbf{f})),
\end{split}
\end{equation}
where $\displaystyle C_b = \underline{\nu}_1 \int_{\R^3} (1 + |v|^\gamma) \mu_1(v) \dd v$ is obviously positive and $\underline{\nu}_1 > 0$ has been explicitly computed inside the proof of Lemma \ref{eq:bounds on nu}.

\smallskip
Going back to our main estimate \eqref{eq:main estimate 1}, we then recover
\begin{equation} \label{eq:main estimate 2}
\scalprod{\mathbf{L}(\mathbf{f}), \mathbf{f}}_{\spaceV} \leq  - (\lambda_{\mathsf{EL}} - 16 C_{\nu} \eta) \norm{\mathbf{f}-\pi^\EL(\mathbf{f})}^2_{\spaceVgamma} - \frac{\eta C_b}{2} \mathcal{E}(\pi^\EL(\mathbf{f})).
\end{equation}

\medskip
\noindent \textbf{Step 4 -- Estimate on the global concentrations.} To conclude, we need to find a relation between the three quantities $\mathcal{E}(\pi^\EL(\mathbf{f}))$, $\norm{\mathbf{f} - \pi^\EL(\mathbf{f})}$ and $\norm{\mathbf{f} - \pi(\mathbf{f})}$. We begin by proving that for any arbitrary\footnote{Certainly one could use the orthonormal basis $\big(\pmb{\phi}_k^\EL\big)_{1\leq k\leq 8}$ given by \eqref{eq:orthonormal basis EL}. For a sake of generality, we show here that the analysis can be done with an arbitrary orthonormal basis.} orthonormal basis $(\pmb{\psi}_k^\EL)_{1\leq k\leq 8}$ of $\Ker(\mathbf{L}^\EL)$ in $L^2(\R^3, \pmb{\mu}^{-1/2})$ we obtain the relation
\begin{equation} \label{eq:main relation}
\norm{\mathbf{f} - \pi(\mathbf{f})}_{\spaceVgamma}^2 \leq 2 \norm{\mathbf{f} - \pi^\EL(\mathbf{f})}_{\spaceVgamma}^2 + C_{\psi} \left( \norm{\pi^\EL(\mathbf{f})}_{\spaceV}^2 - \norm{\pi(\mathbf{f})}_{\spaceV}^2 \right),
\end{equation}
where $C_{\psi} = 16 \underset{1\leq k, \ell \leq 8}{\max} \left| \scalprod{\pmb{\psi}_k^\EL, \pmb{\psi}_\ell^\EL}_{\spaceVgamma} \right|$ and clearly each $\pmb{\psi}_k^\EL$ belongs to $\spaceVgamma$ since the elements of $\Ker(\mathbf{L}^\EL)$ take the form \eqref{eq:projection EL}. Indeed, we have
\begin{equation*}
\norm{\mathbf{f} - \pi(\mathbf{f})}_{\spaceVgamma}^2 \leq 2 \norm{\mathbf{f} - \pi^\EL(\mathbf{f})}_{\spaceVgamma}^2 + 2 \norm{\pi^\EL(\mathbf{f}) - \pi(\mathbf{f})}_{\spaceVgamma}^2,
\end{equation*}
and then for $\mathbf{g} = \pi^\EL(\mathbf{f}) - \pi(\mathbf{f}) \in \Ker(\mathbf{L}^\EL)$ we successively compute
\begin{equation*}
\begin{split}
\norm{\mathbf{g}}_{\spaceVgamma}^2 & = \scalprod{\sum_{k=1}^{8}\scalprod{\mathbf{g}, \pmb{\psi}_k^\EL}_{\spaceV}\pmb{\psi}_k^\EL,\sum_{\ell=1}^{8}\scalprod{\mathbf{g}, \pmb{\psi}_\ell^\EL}_{\spaceV}\pmb{\psi}_\ell^\EL}_{\spaceVgamma} \\[3mm]
& = \sum_{k, \ell = 1}^8 \scalprod{\mathbf{g}, \pmb{\psi}_k^\EL}_{\spaceV} \scalprod{\mathbf{g}, \pmb{\psi}_\ell^\EL}_{\spaceV} \scalprod{\pmb{\psi}_k^\EL, \pmb{\psi}_\ell^\EL}_{\spaceVgamma} \\[4mm]
& \leq \frac{1}{2} \underset{1\leq k, \ell \leq 8}{\max} \left| \scalprod{\pmb{\psi}_k^\EL, \pmb{\psi}_\ell^\EL}_{\spaceVgamma} \right| \sum_{k, \ell = 1}^8 \left( \scalprod{\mathbf{g}, \pmb{\psi}_k^\EL}_{\spaceV}^2 + \scalprod{\mathbf{g}, \pmb{\psi}_\ell^\EL}_{\spaceV}^2 \right) \\[4mm]
& \leq 8 \underset{1\leq k, \ell \leq 8}{\max} \left| \scalprod{\pmb{\psi}_k^\EL, \pmb{\psi}_\ell^\EL}_{\spaceVgamma} \right| \norm{\mathbf{g}}_{\spaceV}^2.
\end{split}
\end{equation*}
At last, since obviously $\Ker(\mathbf{L}^\EL + \mathbf{L}^\CH) \subset \Ker(\mathbf{L}^\EL)$, we have that $\pi^\EL (\pi) = \pi$ and therefore
\begin{equation*}
\begin{split}
\norm{\pi^\EL(\mathbf{f}) - \pi(\mathbf{f})}_{\spaceV}^2 & = \norm{\pi^\EL(\mathbf{f})}_{\spaceV}^2 - 2 \scalprod{\pi^\EL(\mathbf{f}), \pi(\mathbf{f})}_{\spaceV} + \norm{\pi(\mathbf{f})}_{\spaceV}^2 \\[3mm]
& = \norm{\pi^\EL(\mathbf{f})}_{\spaceV}^2 - 2 \scalprod{\mathbf{f}, \pi(\mathbf{f})}_{\spaceV} + \norm{\pi(\mathbf{f})}_{\spaceV}^2 \\[3mm]
& = \norm{\pi^\EL(\mathbf{f})}_{\spaceV}^2 - \norm{\pi(\mathbf{f})}_{\spaceV}^2.
\end{split}
\end{equation*}
This gives us estimate \eqref{eq:main relation}. The final aim is to prove that this last term can actually be estimated using the functional $\mathcal{E}(\pi^\EL(\mathbf{f}))$. To do this, we need an explicit expression for the $L^2$ norms of the projections $\pi^\EL$ and $\pi$, using their orthonormal representations in $L^2(\R^3, \pmb{\mu}^{-1/2})$. From the orthonormal basis \eqref{eq:orthonormal basis EL}, we first study the norm of $\pi^\EL(\mathbf{f})$. Recall the moment identities
\begin{equation*}
\int_{\R^3} \mu_i \dd v = c_{i,\infty}, \qquad \int_{\R^3} |v|^2 \mu_i \dd v = \frac{3 c_{i,\infty}}{m_i}, \qquad \int_{\R^3} |v|^4 \mu_i \dd v = \frac{15 c_{i,\infty}}{m_i^2},
\end{equation*}
and let us denote for simplicity the total concentration with $c_\infty = \sum_{i=1}^4 c_{i,\infty}$ and the total density with $ \rho_\infty = \sum_{i=1}^4 \rho_{i,\infty}$, where $\rho_{i,\infty} = m_i c_{i,\infty}$. Then, from Parseval's identity and the explicit expression \eqref{eq:projection EL} for the orthogonal projection onto $\Ker (\mathbf{L}^\EL)$, we get
\begin{equation*}
\begin{split}
\norm{\pi^\EL(\mathbf{f})}_{\spaceV}^2 & = \sum_{k=1}^8 \scalprod{\mathbf{f}, \pmb{\phi}^\EL_k}_{\spaceV}^2 = \sum_{k=1}^8 \scalprod{\pi^\EL(\mathbf{f}), \pmb{\phi}^\EL_k}_{\spaceV}^2 \\[4mm]
& = \sum_{i=1}^4 \left( \int_{\R^3} \frac{n_i + \frac{m_i}{2} e |v|^2}{\sqrt{c_{i,\infty}}} \mu_i \dd v \right)^2 + \ \sum_{\ell = 1}^3 \left( \sum_{i=1}^4 \int_{\R^3} \frac{m_i^2  u\cdot v}{\sqrt{\rho_\infty}} v_\ell \mu_i \dd v \right)^2 \\[3mm]
& \hspace{6cm} + \left( \sum_{i=1}^4 \int_{\R^3} \frac{n_i + \frac{m_i}{2} e |v|^2}{\sqrt{c_\infty}} \frac{m_i |v|^2 - 3}{\sqrt{6}} \mu_i \dd v \right)^2 \\[4mm]
& = \sum_{i=1}^4 c_{i,\infty} \left( n_i + \frac{3}{2} e \right)^2 + \rho_\infty |u|^2 + \frac{3}{2} c_\infty e^2.
\end{split}
\end{equation*}
Next, to compute the norm of $\pi(\mathbf{f})$ we need to introduce an orthonormal basis for $\Ker(\mathbf{L}^\EL + \mathbf{L}^\CH)$, starting from the seven linearly independent vectors \eqref{eq:basis CH}. We initially observe that $\pmb{\phi}_1$ and $\pmb{\phi}_2$ are orthogonal to one another and that the three vectors $\pmb{\phi}_{\ell + 3}$ related to momentum conservation are already orthogonal to all the others. The easier strategy is then to apply Gram--Schmidt procedure in this given order. We initially compute the orthogonal vectors
\begin{equation} \label{eq:orthonormal basis CH}
\begin{split}
\pmb{\phi}^\CH_1 = \left( \begin{array}{c} \mu_1\\[3mm] 0 \\[3mm] \mu_3 \\[3mm] 0 \end{array} \right), &\qquad \pmb{\phi}^\CH_2 = \left( \begin{array}{c} 0 \\[3mm] \mu_2 \\[3mm] 0 \\[3mm] \mu_4 \end{array} \right), \qquad \pmb{\phi}^\CH_3 = \left( \begin{array}{c} \displaystyle \frac{c_{3,\infty}}{c_{1,\infty}+c_{3,\infty}} \ \mu_1\\[6mm] \displaystyle -\frac{c_{4,\infty}}{c_{2,\infty}+c_{4,\infty}} \ \mu_2 \\[6mm] \displaystyle - \frac{c_{1,\infty}}{c_{1,\infty}+c_{3,\infty}} \ \mu_3 \\[6mm] \displaystyle \frac{c_{2,\infty}}{c_{2,\infty}+c_{4,\infty}} \ \mu_4 \end{array} \right), \\[8mm] 
\pmb{\phi}^\CH_{\ell + 3} = v_\ell \left( \begin{array}{c} m_1 \mu_1 \\[3mm] m_2 \mu_2 \\[3mm] m_3 \mu_3 \\[3mm] m_4 \mu_4 \end{array} \right), & \qquad \pmb{\phi}^\CH_7 = \left( \begin{array}{c} \displaystyle \frac{m_1 |v|^2 - 3}{2} \ \mu_1 \\[6mm] \displaystyle \frac{m_2 |v|^2 - 3}{2} \ \mu_2 \\[6mm] \displaystyle \frac{m_3 |v|^2 - 3}{2} \ \mu_3 \\[6mm] \displaystyle \frac{m_4 |v|^2 - 3}{2} \ \mu_4 \end{array} \right) + E_{12}^{34} \left( \begin{array}{c} \displaystyle - \frac{c_{3,\infty}}{c_{1,\infty}+c_{3,\infty}} \ \frac{c_{12}^{34}}{1+c_{12}^{34}} \ \mu_1 \\[6mm] \displaystyle - \frac{c_{4,\infty}}{c_{2,\infty}+c_{4,\infty}} \ \frac{1}{1+c_{12}^{34}} \ \mu_2 \\[6mm] \displaystyle \frac{c_{1,\infty}}{c_{1,\infty}+c_{3,\infty}} \ \frac{c_{12}^{34}}{1+c_{12}^{34}} \ \mu_3 \\[6mm] \displaystyle \frac{c_{2,\infty}}{c_{2,\infty}+c_{4,\infty}} \ \frac{1}{1+c_{12}^{34}} \ \mu_4 \end{array} \right),
\end{split}
\end{equation}
where $\ell \in \{ 1, 2, 3 \}$ and we have introduced the convenient notation $\displaystyle c_{12}^{34} = \frac{c_{1,\infty} + c_{3,\infty}}{c_{2,\infty} + c_{4,\infty}} \  \frac{c_{2,\infty} c_{4,\infty}}{c_{1,\infty} c_{3,\infty}}$. Orthonormalization follows from division by their respective norms, which write, for $\ell \in \{ 1, 2, 3 \}$,
\begin{gather*}
\norm{\pmb{\phi}^\CH_1}_{\spaceV}^2 = c_{1,\infty} + c_{3,\infty}, \qquad \norm{\pmb{\phi}^\CH_2}_{\spaceV}^2 = c_{2,\infty} + c_{4,\infty}, \\[8mm] 
\norm{\pmb{\phi}^\CH_3}_{\spaceV}^2 = \frac{c_{1,\infty} c_{3,\infty}}{c_{1,\infty} + c_{3,\infty}} \big( 1 + c_{12}^{34} \big) = \frac{c_{2,\infty} c_{4,\infty}}{c_{2,\infty} + c_{4,\infty}} \frac{1 + c_{12}^{34}}{c_{12}^{34}}, \qquad \norm{\pmb{\phi}^\CH_{\ell+3}}_{\spaceV}^2 = \rho_\infty, \\[10mm]
\norm{\pmb{\phi}^\CH_7}_{\spaceV}^2 = \frac{3}{2} c_\infty + \frac{c_{1,\infty} c_{3,\infty}}{c_{1,\infty} + c_{3,\infty}} \frac{c_{12}^{34}}{1 + c_{12}^{34}} \left(E_{12}^{34}\right)^2 = \frac{3}{2} c_\infty + \frac{c_{2,\infty} c_{4,\infty}}{c_{2,\infty} + c_{4,\infty}} \frac{1}{1 + c_{12}^{34}} \left(E_{12}^{34}\right)^2.
\end{gather*}
Using again Parseval's identity and noticing that $\Ker(\mathbf{L}^\EL)^\perp \subset \Ker(\mathbf{L}^\EL + \mathbf{L}^\CH)^\perp$, we can one more time resort to \eqref{eq:projection EL} and compute the norm of $\pi(\mathbf{f})$ as
\begin{equation*}
\begin{split}
& \norm{\pi(\mathbf{f})}_{\spaceV}^2 = \sum_{k=1}^7 \frac{\scalprod{\mathbf{f}, \pmb{\phi}^\CH_k}_{\spaceV}^2}{\norm{\pmb{\phi}^\CH_k}_{\spaceV}^2}  = \sum_{k=1}^7 \frac{\scalprod{\pi^\EL(\mathbf{f}), \pmb{\phi}^\CH_k}_{\spaceV}^2}{\norm{\pmb{\phi}^\CH_k}_{\spaceV}^2} = \\[5mm]
& = \frac{\left[ c_{1,\infty} \left( n_1 + \frac{3}{2} e \right) + c_{3,\infty} \left( n_3 + \frac{3}{2} e \right) \right]^2}{c_{1,\infty}+c_{3,\infty}} + \frac{\left[ c_{2,\infty} \left( n_2 + \frac{3}{2} e \right) + c_{4,\infty} \left( n_4 + \frac{3}{2} e \right) \right]^2}{c_{2,\infty}+c_{4,\infty}} \\[7mm]
& \quad + \frac{1}{\norm{\pmb{\phi}^\CH_3}_{\spaceV}^2} \left[ \frac{c_{1,\infty} c_{3,\infty}}{c_{1,\infty} + c_{3,\infty}} ( n_1 - n_3 ) + \frac{c_{2,\infty} c_{4,\infty}}{c_{2,\infty} + c_{4,\infty}} ( n_4 - n_2 ) \right]^2 + \rho_\infty |u|^2 \\[7mm]
& \quad + \frac{1}{\norm{\pmb{\phi}^\CH_7}_{\spaceV}^2} \left[ \frac{3}{2} c_\infty e - \frac{c_{1,\infty} c_{3,\infty}}{c_{1,\infty} + c_{3,\infty}} \frac{c_{12}^{34}}{1 + c_{12}^{34}}  ( n_1 - n_3 ) E_{12}^{34} + \frac{c_{2,\infty} c_{4,\infty}}{c_{2,\infty} + c_{4,\infty}} \frac{1}{1 + c_{12}^{34}} ( n_4 - n_2 ) E_{12}^{34} \right]^2.
\end{split}
\end{equation*}
Recalling the definition of $c_{12}^{34}$, from the relation $\frac{c_{1,\infty} c_{3,\infty}}{c_{1,\infty} + c_{3,\infty}} c_{12}^{34} = \frac{c_{2,\infty} c_{4,\infty}}{c_{2,\infty} + c_{4,\infty}}$ we deduce that the last numerator can then be recast as
\begin{equation*}
\left[ \frac{3}{2} c_\infty e + \frac{c_{1,\infty} c_{3,\infty}}{c_{1,\infty} + c_{3,\infty}} \frac{c_{12}^{34}}{1 + c_{12}^{34}}  ( n_4 + n_3 - n_2 - n_1 ) E_{12}^{34} \right]^2.
\end{equation*}
We can now study the difference between the two orthogonal projections and successively compute
\begin{equation*}
\begin{split}
& \norm{\pi^\EL(\mathbf{f})}_{\spaceV}^2 - \norm{\pi(\mathbf{f})}_{\spaceV}^2 = \frac{c_{1,\infty} c_{3,\infty}}{c_{1,\infty}+c_{3,\infty}} ( n_3 - n_1 )^2 + \frac{c_{2,\infty} c_{4,\infty}}{c_{2,\infty}+c_{4,\infty}} ( n_4 - n_2 )^2 \\[5mm]
& \hspace*{5cm} - \frac{c_{2,\infty} c_{4,\infty}}{c_{2,\infty}+c_{4,\infty}} \frac{c_{12}^{34}}{1+c_{12}^{34}} (n_4 - n_2)^2 - \frac{c_{1,\infty} c_{3,\infty}}{c_{1,\infty}+c_{3,\infty}} \frac{1}{1+c_{12}^{34}} ( n_3 - n_1 )^2 \\[5mm]
& \hspace*{5cm} + \frac{2}{\norm{\pmb{\phi}^\CH_3}_{\spaceV}^2} \frac{c_{1,\infty} c_{3,\infty}}{c_{1,\infty}+c_{3,\infty}} \frac{c_{2,\infty} c_{4,\infty}}{c_{2,\infty}+c_{4,\infty}} ( n_3 - n_1 )( n_4 - n_2 ) \\[5mm]
& \hspace*{5cm} + \frac{3}{2} c_\infty e^2 - \frac{\left[ \frac{3}{2} c_\infty e + \frac{c_{1,\infty} c_{3,\infty}}{c_{1,\infty} + c_{3,\infty}} \frac{c_{12}^{34}}{1 + c_{12}^{34}}  ( n_4 + n_3 - n_2 - n_1 ) E_{12}^{34} \right]^2}{\frac{3}{2} c_\infty + \frac{c_{1,\infty} c_{3,\infty}}{c_{1,\infty} + c_{3,\infty}} \frac{c_{12}^{34}}{1 + c_{12}^{34}} \left(E_{12}^{34}\right)^2}
\end{split}
\end{equation*}
\begin{equation*}
\begin{split}
& = \frac{c_{1,\infty} c_{3,\infty}}{c_{1,\infty}+c_{3,\infty}} \frac{c_{12}^{34}}{1+c_{12}^{34}} (n_4 + n_3 - n_2 - n_1)^2 + \frac{3}{2} c_\infty e^2 \\[3mm]
& \hspace*{5cm} - \frac{\left[ \frac{3}{2} c_\infty e + \frac{c_{1,\infty} c_{3,\infty}}{c_{1,\infty} + c_{3,\infty}} \frac{c_{12}^{34}}{1 + c_{12}^{34}}  ( n_4 + n_3 - n_2 - n_1 ) E_{12}^{34} \right]^2}{\frac{3}{2} c_\infty + \frac{c_{1,\infty} c_{3,\infty}}{c_{1,\infty} + c_{3,\infty}} \frac{c_{12}^{34}}{1 + c_{12}^{34}} \left(E_{12}^{34}\right)^2} \\[5mm]
& = \frac{\frac{3}{2} c_\infty \frac{c_{1,\infty} c_{3,\infty}}{c_{1,\infty} + c_{3,\infty}} \frac{c_{12}^{34}}{1 + c_{12}^{34}}}{\frac{3}{2} c_\infty + \frac{c_{1,\infty} c_{3,\infty}}{c_{1,\infty} + c_{3,\infty}} \frac{c_{12}^{34}}{1 + c_{12}^{34}} \left(E_{12}^{34}\right)^2} \left( n_4 + n_3 - n_2 - n_1 - e E_{12}^{34} \right)^2 \\[7mm]
& \leq \frac{c_{1,\infty} c_{3,\infty}}{c_{1,\infty} + c_{3,\infty}} \frac{c_{12}^{34}}{1 + c_{12}^{34}}\mathcal{E}(\pi^\EL(\mathbf{f})) \\[7mm]
& \leq c_{\infty}^4\mathcal{E}(\pi^\EL(\mathbf{f})),
\end{split}
\end{equation*}
where we have used the definition of $c_{12}^{34}$ to determine the last estimate. Going back to the previous relation \eqref{eq:main relation}, we finally infer that
\begin{equation} \label{eq:C_psi}
    - \norm{\mathbf{f} - \pi^\EL(\mathbf{f})}_{\spaceVgamma}^2 \leq - \frac 12\norm{\mathbf{f} - \pi(\mathbf{f})}_{\spaceVgamma}^2  + \frac{C_{\psi}c_{\infty}^4}{2} \mathcal{E}(\pi^\EL(\mathbf{f})).
\end{equation}

\medskip
\noindent \textbf{Step 5 -- Combining the estimates to conclude.} At last, it is clear that injecting estimate \eqref{eq:C_psi} into \eqref{eq:main estimate 2} for $\delta \in (0,1]$,
we can infer that the linearized reactive operator $\mathbf{L} = \mathbf{L}^\EL + \mathbf{L}^\CH$ satisfies the upper bound
\begin{equation*}
\begin{split}
\scalprod{\mathbf{L}(\mathbf{f}), \mathbf{f}}_{\spaceV} & \leq -\frac{\delta}{2}(\lambda_\EL-16C_{\nu} \eta) \norm{\mathbf{f} - \pi(\mathbf{f})}_{\spaceVgamma}^2 \\[5mm]
& \qquad - \frac{1}{2} \left( \eta C_b - \delta(\lambda_\EL - 16 C_{\nu}\eta) C_{\psi} c_{\infty}^4 \right) \mathcal{E}(\pi^\EL(\mathbf{f})),
\end{split}
\end{equation*}
where $\eta \in (0,1]$ must verify $\eta < \lambda_\EL/(16 C_{\nu})$. Thus, by choosing a sufficiently small $\delta\in (0,1]$, we obtain the desired spectral gap estimate
\begin{equation*}
    \scalprod{\mathbf{L}(\mathbf{f}), \mathbf{f}}_{\spaceV} \leq - \lambda \norm{\mathbf{f} - \pi(\mathbf{f})}_{\spaceVgamma}^2,
\end{equation*}
with $\lambda = \delta(\lambda_\EL - 16 C_{\nu}\eta)/2$. In order to optimize $\lambda$, we choose $\delta = \min\left\{1,\ \frac{\eta C_b}{(\lambda_\EL - 16C_{\nu}\eta)C_{\psi}c_{\infty}^4}\right\}$, which in turn leads to $\lambda = \min\left\{\frac{\lambda_\EL-16C_{\nu}\eta}{2},\ \frac{\eta C_b}{2C_{\psi}c_{\infty}^4}\right\}$. Therefore, if $\frac{\lambda_\EL C_{\psi}c_{\infty}^4}{C_b+16C_{\nu}C_{\psi}c_{\infty}^4} \leq 1$, by taking $\eta= \frac{\lambda_\EL C_{\psi}c_{\infty}^4}{C_b+16C_{\nu}C_{\psi}c_{\infty}^4}$ we obtain an explicit expression for the spectral gap, that is given by
\begin{equation*}
\lambda = \frac{\lambda_\EL C_b}{2( C_b +  16C_{\nu}C_{\psi}c_{\infty}^4)},
\end{equation*}
where the constants
\begin{equation*}
C_{\nu} = \max_{1 \leq i \leq 4} \overline{\nu}_i, \qquad C_b = \underline{\nu}_1 \int_{\R^3} (1 + |v|^\gamma) \mu_1(v) \dd v, \qquad C_{\psi} = \underset{1\leq k, \ell \leq 8}{\max} \left| \scalprod{\pmb{\psi}_k^\EL, \pmb{\psi}_\ell^\EL}_{\spaceVgamma} \right|,
\end{equation*}
come respectively from estimates \eqref{eq:C_nu}, \eqref{eq:C_b} and \eqref{eq:C_psi}, and we recall that $(\pmb{\psi}^\EL_k)_{1\leq k\leq 8}$ is an arbitrary orthonormal basis of $\Ker(\mathbf{L}^\EL)$ in $L^2(\R^3, \pmb{\mu}^{-1/2})$. This completes the proof of Theorem \ref{theorem:reactive SG}.

\section{Conclusion}

In this work, we proved the existence of an explicit spectral gap for the linearized Boltzmann operator modeling reactive gaseous mixtures that involve reversible bimolecular chemical reactions of the form $\mathcal{S}_1 + \mathcal{S}_2 \leftrightharpoons \mathcal{S}_3 + \mathcal{S}_4$. This linearized operator appears when considering solutions to the reactive Boltzmann system, perturbed around the global Maxwellian states \eqref{eq:Maxwellian}. The study of quantitative spectral gap estimates for such operators in the case of elastic collisions inside monoatomic gases consisting of one or more species has been extensively investigated in the literature, while the analysis of linearized operators for reactive mixtures was only limited to non-constructive results that could not provide information on the magnitude of their spectral gap. Our work filled this gap by obtaining explicit estimates on the spectral gap of the linearized reactive Boltzmann operator, in terms of the total concentration of the mixture, the different molecular masses of the species, the chemical binding energy of the reactions, the elastic and reactive collision kernels. Our proof is inspired by the ideas developed in \cite{BriDau,DauJunMouZam} where the spectral gap of the elastic monospecies operator is used to control that of its multi-species counterpart. Similarly, we split here the linearized reactive operator $\mathbf{L} = \mathbf{L}^\EL + \mathbf{L}^\CH$ into an elastic and a chemical part. The former $\mathbf{L}^\EL$ is known to possess a spectral gap from the results in \cite{BriDau,DauJunMouZam} and we showed how to exploit this information in order to control $\mathbf{L}^\CH$, by quantifying in particular the relaxation of equilibrium distributions $\pi^\EL(\mathbf{f}) \in \Ker(\mathbf{L}^\EL)$ for the linearized elastic operator $\mathbf{L}^\EL$ toward equilibrium states $\pi(\mathbf{f}) \in \Ker(\mathbf{L}^{\EL} + \mathbf{L}^{\CH})$ for the full linearized reactive operator $\mathbf{L}$.

\smallskip
The result of this work provides the basis for the development of a quantitative Cauchy theory of close-to-equilibrium solutions to reactive Boltzmann systems and the study of their convergence to equilibrium. It is also a first step toward the rigorous derivation of hydrodynamic limits for these kinetic models, leading to reaction--diffusion systems that involve nonlinear mass action kinetics \cite{BisDes} in the spirit of the recent results \cite{BonBri1,BonBri2,BriGre} on the derivation of the multi-species Maxwell--Stefan and Fick equations. An ongoing work by the first author is addressing this particular problem. Another interesting open question is the study of relaxation to equilibrium for solutions to the reactive Boltzmann system in a global setting, i.e. far from equilibrium. Further perspectives include the investigation of fast reaction hydrodynamic limits, involving more complicate chemical kinetics like the celebrated Michaelis--Menten kinetics, and the derivation of macroscopic models that display both cross-diffusion and reaction dynamics \cite{AnwBisSalSoa,AnwGonSoa}.


\bigskip
\noindent \textbf{Acknowledgments.} We wish to thank the two anonymous referees that provided helpful suggestions to improve this work. AB acknowledges the support from the Italian National Group of Mathematical Physics (GNFM--INdAM), from the Austrian Science Fund (FWF) through the Lise Meitner project No. M-3007 (Asymptotic Derivation of Diffusion Models for Mixtures) and from the European Union’s Horizon Europe research and innovation programme, under the Marie Skłodowska-Curie grant agreement No. 101110920, project MesoCroMo (A Mesoscopic approach to Cross-diffusion Modelling in population dynamics). BQT received funding from the FWF project number I-5213 (Quasi-steady-state approximation for PDE). Both authors acknowledge support from the COST Action CA18232 MAT-DYN-NET, funded by COST (European Cooperation in Science and Technology).

\begin{figure}[h!]
\begin{flushleft}
\includegraphics[scale=0.3]{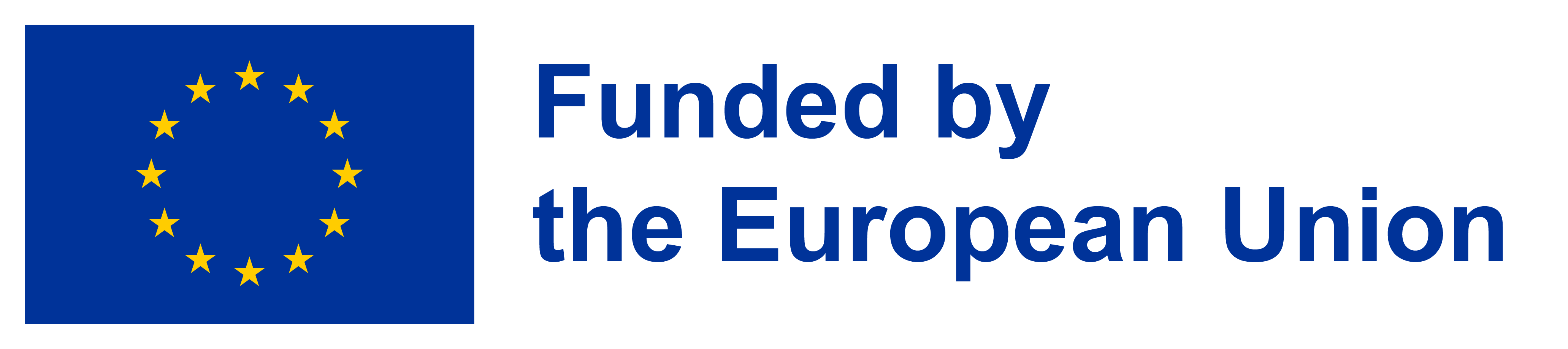}
\end{flushleft}
\end{figure}

\bigskip
\noindent \textbf{Disclaimer.} Funded by the European Union. Views and opinions expressed are however those of the author(s) only and do not necessarily reflect those of the European Union or of the European Research Executive Agency (REA). Neither the European Union nor the granting authority can be held responsible for them.


\bigskip
\bibliographystyle{plain}
\bibliography{Bibliography_SGRBE}

\setlength\parindent{0pt}

\end{document}